\DeclareMathAlphabet{\pazocal}{OMS}{zplm}{m}{n}
\SetMathAlphabet\pazocal{bold}{OMS}{zplm}{bx}{n}
\newtheorem{theorem}{Theorem}
\newtheorem{answer}[theorem]{Answer}
\newtheorem{conjecture}[theorem]{Conjecture}
\newtheorem{corollary}[theorem]{Corollary}
\newtheorem{definition}[theorem]{Definition}
\newtheorem{example}[theorem]{Example}
\newtheorem{lemma}[theorem]{Lemma}
\newtheorem{openquestion}[theorem]{Open Question}
\newtheorem{problem}[theorem]{Problem}
\begin{document}

\title{Ranking and Unranking Restricted Permutations}
\author{Peter Kagey}

\maketitle

\begin{abstract}
  We discuss efficient methods for unranking derangements and m\'enage
  permutations. That is, we will provide an algorithm to efficiently extract
  the $k$-th earliest such permutation under the lexicographic ordering.
  We will show that this problem can be reduced to the problem of computing the
  number of restricted permutations with a given prefix, and then we will
  use rook theory to solve this counting problem. This has applications to
  combinatorics, probability, statistics, and modeling.
\end{abstract}

\section{Overview and preliminaries}
In 2020, Richard Arratia announced two \$100 prizes with a similar flavor,
both about \textit{unranking problems}: given a strict total order on a finite
set, when is it possible to directly compute the $i$-th element with respect to
the total order?

\begin{problem}[\$100 question]
  For $n=20$ there are \[
    895\,014\,631\,192\,902\,121 > 8.9\cdot 10^{17}
  \]
  elements of the set\[
    \left\{\pi \in S_n \mid \pi(i) \neq i\ \forall\ 1 \leq i \leq n \right\},
  \] which are called derangements.\footnote{This function is described by sequence A000166 in the
  On-Line Encyclopedia of Integer Sequences (OEIS) \cite{oeis}.}
  Determine the $5\times10^{17}$-th such permutation when listed in lexicographic order.
\end{problem}
\addtocounter{theorem}{-1}
\begin{answer}
  The derangement in $S_{20}$ with rank $5 \times 10^{17}$ is \[
    12 \ 14 \ 2 \ 9 \ 13 \ 20 \ 6 \ 3 \ 1 \ 17 \ 5 \ 11 \ 19 \ 15 \ 10 \ 18 \ 8 \ 7 \ 4 \ 16.
  \]
  The algorithm for computing this permutation will be described in Section \ref{sec:unrankingDerangements}.
\end{answer}
\begin{problem}[\$100 question]
  For $n=20$ there are \[
    312\,400\,218\,671\,253\,762 > 3.1\cdot 10^{17}
  \]
  elements of the set \[
    \left\{\pi \in S_n \mid \pi(i) \not\in \{i-1, i\} \pmod n\ \forall\ 1 \leq i \leq n \right\},
  \]
  which are called m\'enage permutations.\footnote{
  This function is described by sequence A000179 in the OEIS \cite{oeis}.}
  Determine the $10^{17}$-th such permutation when listed in lexicographic order.
\end{problem}
\addtocounter{theorem}{-1}
\begin{answer}
  The m\'enage permutation in $S_{20}$ with rank $10^{17}$ is
  \[
    7 \ 16 \ 19 \ 12 \ 2 \ 8 \ 15 \ 1 \ 18 \ 14 \ 3 \ 9 \ 20 \ 10 \ 5 \ 17 \ 13 \ 4 \ 11 \ 6.
  \]
  The algorithm for computing this value will be described in
  Section \ref{sec:unrankingMenage}.
\end{answer}

\begin{example}
  The $44$ derangements in $S_5$, ordered lexicographically are.

  \begin{tikzpicture}
    \node[anchor=east] at (0*1.45,11/2.0)  {$\pi_{ 1} = 21453$};
    \node[anchor=east] at (0*1.45,10/2.0)  {$\pi_{ 2} = 21534$};
    \node[anchor=east] at (0*1.45, 9/2.0)  {$\pi_{ 3} = 23154$};
    \node[anchor=east] at (0*1.45, 8/2.0)  {$\pi_{ 4} = 23451$};
    \node[anchor=east] at (0*1.45, 7/2.0)  {$\pi_{ 5} = 23514$};
    \node[anchor=east] at (0*1.45, 6/2.0)  {$\pi_{ 6} = 24153$};
    \node[anchor=east] at (0*1.45, 5/2.0)  {$\pi_{ 7} = 24513$};
    \node[anchor=east] at (0*1.45, 4/2.0)  {$\pi_{ 8} = 24531$};
    \node[anchor=east] at (0*1.45, 3/2.0)  {$\pi_{ 9} = 25134$};
    \node[anchor=east] at (0*1.45, 2/2.0)  {$\pi_{10} = 25413$};
    \node[anchor=east] at (0*1.45, 1/2.0)  {$\pi_{11} = 25431$};

    \node[anchor=east] at (2*1.45,11/2.0)  {$\pi_{12} = 31254$};
    \node[anchor=east] at (2*1.45,10/2.0)  {$\pi_{13} = 31452$};
    \node[anchor=east] at (2*1.45, 9/2.0)  {$\pi_{14} = 31524$};
    \node[anchor=east] at (2*1.45, 8/2.0)  {$\pi_{15} = 34152$};
    \node[anchor=east] at (2*1.45, 7/2.0)  {$\pi_{16} = 34251$};
    \node[anchor=east] at (2*1.45, 6/2.0)  {$\pi_{17} = 34512$};
    \node[anchor=east] at (2*1.45, 5/2.0)  {$\pi_{18} = 34521$};
    \node[anchor=east] at (2*1.45, 4/2.0)  {$\pi_{19} = 35124$};
    \node[anchor=east] at (2*1.45, 3/2.0)  {$\pi_{20} = 35214$};
    \node[anchor=east] at (2*1.45, 2/2.0)  {$\pi_{21} = 35412$};
    \node[anchor=east] at (2*1.45, 1/2.0)  {$\pi_{22} = 35421$};

    \node[anchor=east] at (4*1.45,11/2.0)  {$\pi_{23} = 41253$};
    \node[anchor=east] at (4*1.45,10/2.0)  {$\pi_{24} = 41523$};
    \node[anchor=east] at (4*1.45, 9/2.0)  {$\pi_{25} = 41532$};
    \node[anchor=east] at (4*1.45, 8/2.0)  {$\pi_{26} = 43152$};
    \node[anchor=east] at (4*1.45, 7/2.0)  {$\pi_{27} = 43251$};
    \node[anchor=east] at (4*1.45, 6/2.0)  {$\pi_{28} = 43512$};
    \node[anchor=east] at (4*1.45, 5/2.0)  {$\pi_{29} = 43521$};
    \node[anchor=east] at (4*1.45, 4/2.0)  {$\pi_{30} = 45123$};
    \node[anchor=east] at (4*1.45, 3/2.0)  {$\pi_{31} = 45132$};
    \node[anchor=east] at (4*1.45, 2/2.0)  {$\pi_{32} = 45213$};
    \node[anchor=east] at (4*1.45, 1/2.0)  {$\pi_{33} = 45231$};

    \node[anchor=east] at (6*1.45,11/2.0)  {$\pi_{34} = 51234$};
    \node[anchor=east] at (6*1.45,10/2.0)  {$\pi_{35} = 51423$};
    \node[anchor=east] at (6*1.45, 9/2.0)  {$\pi_{36} = 51432$};
    \node[anchor=east] at (6*1.45, 8/2.0)  {$\pi_{37} = 53124$};
    \node[anchor=east] at (6*1.45, 7/2.0)  {$\pi_{38} = 53214$};
    \node[anchor=east] at (6*1.45, 6/2.0)  {$\pi_{39} = 53412$};
    \node[anchor=east] at (6*1.45, 5/2.0)  {$\pi_{40} = 53421$};
    \node[anchor=east] at (6*1.45, 4/2.0)  {$\pi_{41} = 54123$};
    \node[anchor=east] at (6*1.45, 3/2.0)  {$\pi_{42} = 54132$};
    \node[anchor=east] at (6*1.45, 2/2.0)  {$\pi_{43} = 54213$};
    \node[anchor=east] at (6*1.45, 1/2.0)  {$\pi_{44} = 54231$};
\end{tikzpicture}

In particular, unranking the $19$th derangement in $S_5$ gives $35214$;
alternatively, ranking the derangement $43251$ gives $26$.
\end{example}

To be explicit about what we wish to compute efficiently, we define the notion
of a \textit{unranking}.
\begin{definition}
  Let $\mathcal C$ be a finite set with a strict total order, and let
  $\{c_i\}_{i=1}^{|\mathcal C|}$ be the unique sequence of elements in
  $\mathcal{C}$ such that $c_i < c_{i+1}$ for all $1 \leq i < |\mathcal{C}|$.
  Then a \textbf{unranking} is a map
  \[
    \operatorname{unrank}_{\mathcal{C}} \colon \{1, 2, \dots, |\mathcal C|\} \to \mathcal{C}
  \] that sends $i \mapsto c_i$.
\end{definition}

An efficient algorithm to unrank a collection of objects implies an efficient
algorithm for sampling from the collection uniformly at random by sampling
the indices uniformly at random.
This can be of use in the case of Monte Carlo simulations and other
instances where it is useful to be able to sample uniformly from a collection
of combinatorial objects.

As the name suggests, every unranking problem comes with a dual problem called
the ``ranking problem.''
\begin{definition}
  A \textbf{ranking} of a finite set $\mathcal C$ with a strict total order is
  a map
  \[
    \operatorname{rank}_{\mathcal{C}} \colon \mathcal{C} \to \{1, 2, \dots, |\mathcal C|\}
  \] that sends $c_i \mapsto i$.
\end{definition}

The existence of both efficient ranking and unranking maps implies the existence
of an efficient encoding for these objects, which may be of interest to computer
scientists. The encoding works by ranking an object to get its index, which then
can be stored in as a positive integer and unranked on retrieval to recover the
original object.


In the remaining sections, we will show how we resolved both of the above
problems in order to claim Richard Arratia's two \$100 prizes.
In particular, we will construct an algorithm for ranking and unranking both
derangements and m\'enage permutations under the lexicographic ordering.
We will show that the existence of an efficient way to count
the number of such permutations with a given prefix implies that there is an
efficient way to compute the ranking and unranking maps.
Then we will develop some ideas from rook theory and apply them to the context
of derangements and m\'enage permutations.

\section{Prefix counting and word ranking}


In both the case of unranking derangements and menage permutations
(and in many other applications) our combinatorial objects are
words in lexicographic order, which is a generalization of alphabetical order.

We begin by developing a general theory for unranking collections of words in
lexicographic order by counting the number of words with a given prefix.
\subsection{Words with a given prefix}

We will start by introducing some basic definitions about words and prefixes,
and to formalize the notion of lexicographic order.

\begin{definition}
  A finite \textbf{word} $w$ over an alphabet $\mathcal A$ is a finite sequence
  $\{w_i \in \mathcal A\}_{i=1}^N$.

  The collection of finite words over the alphabet $\mathcal A$ is denoted by
  $\pazocal{W}_\mathcal{A}$, or just $\pazocal{W}$ when the alphabet is
  implicit from context.
\end{definition}

\begin{definition}
  A word $w = \{w_i \in \mathcal A\}_{i=1}^N$ is said to begin with a
  \textbf{prefix $\boldsymbol{\alpha}$ of length $\boldsymbol{M}$}
  if $M \leq N$, $\alpha = \{\alpha_i \in \mathcal A\}_{i=1}^M$, and
  $w_i = \alpha_i$ for all $i \leq M$.
\end{definition}

\begin{definition}
  A word $w$ is said to be before $w'$ in \textbf{lexicographic order}
  if either $w$ is a proper prefix of $w'$, or if at the first position, $i$,
  where $w$ and $w'$ differ, $w_i < w'_i$.
  \label{def:lexicographicOrder}
\end{definition}

With these definitions established, we can turn the problem of unranking words
into a problem about counting words with specified prefixes.

\begin{theorem}
  For $k > 0$, let $\pazocal{W}$ be a set of nonempty words on the alphabet
  $[n] = \{1, 2, \dots, n\}$, and let $\mathcal C \subsetneq \pazocal{W}$ be a finite subset of words
  on this alphabet, with a total order given by its lexicographic order.

  Let
  $\operatorname{\#prefix}_{\mathcal C}\colon \pazocal{W} \rightarrow \mathcal{C}$
  be the function that counts the number of words in $\mathcal C$ that begin
  with a given prefix.

  Then the unranking function can be computed recursively by \begin{equation}
    \operatorname{unrank}_\mathcal{C}(i) = f^{\mathcal C}_i((1), 0)
  \end{equation} where
  \begin{numcases}{f^{\mathcal C}_i(\alpha, j) =}
    f^{\mathcal C}_i(\alpha', j + \operatorname{\#prefix}_\mathcal{C}(\alpha))
    & $i > j + \operatorname{\#prefix}_\mathcal{C}(\alpha)$
  \label{case:unrankIncrement}
  \\[8pt]
  f^{\mathcal C}_i(\alpha'', j)
    & $\alpha \not\in \mathcal{C}$ and $i \leq j + \operatorname{\#prefix}_\mathcal{C}(\alpha)$
  \label{case:unrankAppend}
  \\[8pt]
  f^{\mathcal C}_i(\alpha'', j + 1)
    & \parbox{4.5cm}{
       $\alpha \in \mathcal{C}$, $i \leq j + \operatorname{\#prefix}_\mathcal{C}(\alpha)$, \\ and $i \neq j + 1$
    }
  \label{case:unrankPrefix}
  \\[8pt]
  \alpha
    & $\alpha \in \mathcal{C}$ and $i = j + 1$,
  \label{case:unrankFinish}
  \end{numcases}
where \begin{align*}
  \alpha   &= (\alpha_1, \alpha_2, \dots, \alpha_\ell), \\
  \alpha'  &= (\alpha_1, \alpha_2, \dots, \alpha_{\ell-1}, 1 + \alpha_\ell), \\
  \alpha'' &= (\alpha_1, \alpha_2, \dots, \alpha_\ell, 1),
\end{align*}
and $j$ denotes the number of words in $\mathcal{C}$ that occur strictly
before $\alpha$.
\label{theorem:unrankFromPrefix}
\end{theorem}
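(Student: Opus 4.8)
The plan is to show that the recursion $f^{\mathcal C}_i$ carries out a walk through the words of $[n]^{\ast}$ in lexicographic order, maintaining the invariant that the pair $(\alpha, j)$ always satisfies: $j$ equals the number of elements of $\mathcal C$ that occur strictly before $\alpha$ in lexicographic order, and the target $c_i = \operatorname{unrank}_{\mathcal C}(i)$ has $\alpha$ as a prefix (or equals $\alpha$). I would first verify this invariant holds at the initial call $f^{\mathcal C}_i((1), 0)$: the word $(1)$ is the lexicographically least nonempty word over $[n]$, so no element of $\mathcal C$ precedes it, giving $j = 0$; and since $\mathcal C \subseteq \pazocal W$ is nonempty with $c_i$ well defined, $c_i$ either begins with $1$ or—if it does not—then actually we need a small argument that $c_i$ begins with $1$, which holds because $(1)$ is a prefix of the least word and we will show the recursion never ``skips'' past $c_i$. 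More cleanly, I would phrase the invariant so that it is preserved by each of the four cases and so that case~\eqref{case:unrankFinish} can only fire at $\alpha = c_i$.

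The core of the argument is a case analysis showing each branch preserves the invariant and strictly decreases a termination measure. For case~\eqref{case:unrankIncrement}: the condition $i > j + \#\operatorname{prefix}_{\mathcal C}(\alpha)$ says that all words of $\mathcal C$ extending $\alpha$ (there are $\#\operatorname{prefix}_{\mathcal C}(\alpha)$ of them, counting $\alpha$ itself if $\alpha \in \mathcal C$) come before $c_i$, so $c_i$ does not extend $\alpha$; since $c_i$ does extend the length-$(\ell-1)$ prefix of $\alpha$ (this is where an outer induction on $\ell$ or a separate lemma is needed), $c_i$ must extend $\alpha' = (\alpha_1,\dots,\alpha_{\ell-1}, \alpha_\ell + 1)$ or a later sibling—here one must check $\alpha_\ell + 1 \le n$, i.e. that we never run off the alphabet, which follows from $i \le |\mathcal C|$. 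The new count $j + \#\operatorname{prefix}_{\mathcal C}(\alpha)$ is exactly the number of words of $\mathcal C$ strictly before $\alpha'$, because the words strictly before $\alpha'$ are precisely those strictly before $\alpha$ together with those extending $\alpha$. For cases~\eqref{case:unrankAppend} and~\eqref{case:unrankPrefix}: the hypothesis $i \le j + \#\operatorname{prefix}_{\mathcal C}(\alpha)$ forces $c_i$ to extend $\alpha$; appending a $1$ moves to the least word extending $\alpha$, and the bookkeeping of $j$ (incremented by $1$ exactly when $\alpha \in \mathcal C$, since then $\alpha$ itself is an element of $\mathcal C$ lying strictly before $\alpha'' = (\alpha_1, \dots, \alpha_\ell, 1)$) keeps the invariant. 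Case~\eqref{case:unrankFinish} returns $\alpha$ when $\alpha \in \mathcal C$ and $i = j+1$, i.e. $\alpha$ is the $(j+1)$-st element of $\mathcal C$, which is $c_i$.

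For termination, I would exhibit a measure that strictly decreases along every recursive call: for instance, the quantity $(\text{number of words of } \mathcal C \text{ after } \alpha,\ \text{remaining room to grow or increment } \alpha)$ ordered lexicographically, noting that case~\eqref{case:unrankIncrement} strictly decreases the count of $\mathcal C$-words weakly after the current node while cases~\eqref{case:unrankAppend}–\eqref{case:unrankPrefix} strictly shrink the gap since $\mathcal C$ is finite and $\alpha$ cannot be extended indefinitely while still being a prefix of some word in $\mathcal C$. Because $\mathcal C \subsetneq \pazocal W$ is finite, every word in $\mathcal C$ has bounded length, so the ``append'' steps cannot recur forever. Combining the invariant with termination: the recursion halts, necessarily in case~\eqref{case:unrankFinish} (the other three all make recursive calls), returning the word $\alpha$ with $\alpha \in \mathcal C$ and $j$ equal to the number of $\mathcal C$-words strictly before it and $i = j+1$; hence $\alpha = c_i = \operatorname{unrank}_{\mathcal C}(i)$.

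The main obstacle I anticipate is making precise the claim used repeatedly that ``$c_i$ extends the current prefix $\alpha$ (or one of its later siblings)''—in other words, that the walk defined by the recursion genuinely tracks the location of $c_i$ and never jumps past it. The clean way to handle this is to fold it into the loop invariant from the start: maintain the conjunction that $\alpha \le c_i$ in lexicographic order, that $j = \#\{c \in \mathcal C : c < \alpha\}$, and that either $\alpha$ is a prefix of $c_i$ or $\alpha = c_i$; then each case analysis is a local check. A secondary technical point is confirming that $\alpha' $ and $\alpha''$ stay inside $[n]^{\ast}$ (no overflow past $n$, no empty word produced), which reduces to the standing hypotheses that $1 \le i \le |\mathcal C|$ and that $\mathcal C$ consists of nonempty words over $[n]$.
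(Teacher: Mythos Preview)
Your overall architecture matches the paper's: maintain an invariant about the pair $(\alpha,j)$, verify the base case, do a case analysis, and argue termination. The substantive gap is in the invariant you chose. You assert that ``$c_i$ has $\alpha$ as a prefix (or equals $\alpha$)'' throughout the recursion, and you repeat this in your final paragraph as part of the conjunction you want to maintain. This is too strong and is \emph{not} preserved by case~\eqref{case:unrankIncrement}. Concretely, take $n=3$, $\mathcal C = \{(3)\}$, $i=1$: the recursion visits $\alpha = (1)$, then $(2)$, then $(3)$, and neither $(1)$ nor $(2)$ is a prefix of $c_1 = (3)$. You half-see this when you write that after case~\eqref{case:unrankIncrement} the target ``must extend $\alpha'$ \emph{or a later sibling},'' but that admission is exactly a confession that the prefix invariant fails at $\alpha'$.

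The paper's proof carries the weaker invariant $\alpha \le c_i$ (together with $j = \#\{c \in \mathcal C : c < \alpha\}$), and that is what actually goes through. In case~\eqref{case:unrankIncrement} one argues only that $\alpha' \le c_i$, using that $\alpha'$ is the lexicographically least length-$\ell$ word after $\alpha$ and that $c_i$ cannot have $\alpha$ as a prefix (since its index exceeds $j + \#\operatorname{prefix}_{\mathcal C}(\alpha)$). Once you drop the prefix clause and keep only $\alpha \le c_i$, your case analysis for \eqref{case:unrankAppend}--\eqref{case:unrankFinish} goes through essentially as written, because in those cases the hypothesis $i \le j + \#\operatorname{prefix}_{\mathcal C}(\alpha)$ \emph{does} force $\alpha$ to be a prefix of $c_i$. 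So the fix is local: weaken your global invariant to $\alpha \le c_i$, and derive the prefix property only inside the branches where it actually holds.
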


\begin{proof}
  We make three claims that we will prove using
  induction on the recursive applications of $f_i^\mathcal{C}(\alpha, j)$:
  (1) that $j$ is the number of words in $\mathcal{C}$ that occur strictly before $\alpha$,
  (2) that $\alpha \leq w_i$,
  and (3) that the sequence of $\alpha$s is strictly increasing.

  This final claim (the sequence of $\alpha$s is strictly increasing)
  follows from the observation that ${\alpha < \alpha'' < \alpha'}$ in
  lexicographic order.

  Because each iteration increases either $j$ or $\ell$
  (the number of letters in $\alpha$) or both,
  the number of recursive applications of $f_i^\mathcal{C}$ required to
  determine $w_i$ is at most $i + \max_{w \in \pazocal W} |w|$, which is
  finite because $\pazocal W$ only contains of finite words.

  The base case is clear: We start with $f_i^\mathcal{C}((1), 0)$ because
  $(1)$ is the lexicographically earliest word, so $0$ nonempty words strictly
  precede it, and $(1) \leq w_i$.

  We will repeatedly use the observation that if $j$ words precede $\alpha$,
  then a word has prefix $\alpha$ if and only if its index is in
  $(j, j + \operatorname{\#prefix}_\mathcal{C}(\alpha)]$. Note that this
  range is empty whenever there are no words prefixed by $\alpha$.

  \textit{Case \eqref{case:unrankIncrement}.}
  Because $j + \operatorname{\#prefix}_\mathcal{C}(\alpha)$ is the index
  of the last word that begins with $\alpha$,
  if $i > j + \operatorname{\#prefix}_\mathcal{C}(\alpha)$, then $w_i$ must
  begin with a length-$\ell$ prefix that is lexicographically later than $\alpha$.

  By construction, $\alpha'$ is the lexicographically earliest word of length
  $\ell$ that comes after $\alpha$, therefore $\alpha' \leq w_i$.
  As such, the number of words that strictly precede
  $\alpha'$ is ${j + \operatorname{\#prefix}_\mathcal{C}(\alpha)}$,
  which is the sum of the number of words that occur strictly before $\alpha$
  and the number of words that have prefix $\alpha$.

  \textit{Case \eqref{case:unrankAppend}.}
  If $\alpha \not\in \mathcal C$ and $i \leq j + \operatorname{\#prefix}_\mathcal{C}(\alpha)$,
  then $\alpha$ is a \textit{proper} prefix of $w_i$, and $w_i$ is of length at
  least $\ell + 1$.
  By construction, $\alpha''$ is the lexicographically earliest word of length
  $\ell + 1$ that has a prefix of $\alpha$,
  so $\alpha'' \leq w_i$,
  and the number of words in $\mathcal{C}$ that precede
  $\alpha''$ is equal to $j$, the number of words that precede $\alpha$.

  \textit{Case \eqref{case:unrankPrefix}.}
  If $\alpha \in \mathcal C$,
  $i \leq j + \operatorname{\#prefix}_\mathcal{C}(\alpha)$, and
  $i \neq j + 1$ then $\alpha$ must be the word at index $j + 1 < i$,
  because $\alpha$ itself is the lexicographically earliest word with the prefix
  $\alpha$. Because words cannot appear multiple times in $\mathcal{C}$, $w_i$
  must have $\alpha$ as a \textit{proper} prefix.
  Therefore $\alpha'' \leq w_i$ and
  the number of words that strictly precede $\alpha''$ is $j + 1$:
  the number of words that strictly precede $\alpha$ plus $\alpha$ itself.

  \textit{Case \eqref{case:unrankFinish}.}
  If $\alpha \in \mathcal{C}$ and $i = j + 1$, then
  $w_i = \alpha$ because $\alpha$ itself is the lexicographically earliest word
  with the prefix $\alpha$, so it must occur at index $i = j + 1$.
\end{proof}

Notice that each recursive call of $f_i^\mathcal{C}$ increases the sum of the
letters of $\alpha$. If we suppose that $\mathcal{C}$ is a finite set of words
on the alphabet $[n]$ of length at most $k$, then unranking
a word from $\mathcal{C}^{(\leq k)}$ requires at most $nk$ recursive
applications of $f_i^{\mathcal{C}^{(\leq k)}}$.

Therefore
if there exists a polynomial time algorithm for computing
$\operatorname{\#prefix}_{\mathcal{C}}$, then there exists an unranking
algorithm that is polynomial in the size of the alphabet $\mathcal A$ and the
length of the longest word. In the case of restricted permutations, each of
these grow linearly with the number of letters in the m\'enage permutations.

\subsection{Ranking words}
Just as we can recursively find a word at a given index, we can also
recursively find a index corresponding to a given word.

\begin{theorem} If $\mathcal{C} \subsetneq \pazocal{W}$ is a collection
  of nonempty words over the alphabet $[n]$, then the rank of the word
  $w \in \mathcal{C}$ can be computed as the sum \[
    \operatorname{rank}_{\mathcal{C}}(w) =
    \sum_{i=1}^{|w|} \left(
      \mathbbm{1}_\mathcal{C}(w_{(i)}) +
      \sum_{\ell = 1}^{w_i - 1} \operatorname{\#prefix}_\mathcal{C}(w_{(i-1)}^\ell)
    \right)
  \]
  where \begin{align*}
    w &= (w_1, w_2, \dots, w_{|w|}), \\
    w_{(i)} &= (w_1, w_2, \dots, w_{i-1}, w_i), \\
    w_{(i-1)}^\ell &= (w_1, w_2, \dots, w_{i-1}, \ell),
  \end{align*}
  and $\mathbbm{1}_{\mathcal{C}} \colon \pazocal{W} \to \{0,1\}$ is the
  indicator function of $\mathcal{C}$, where
  $\mathbbm{1}_{\mathcal{C}}(x) = 1$ if and only if $x \in \mathcal{C}$.
\end{theorem}
\begin{proof}
  Note that the inner sum \[
    \mathbbm{1}_\mathcal{C}(w_{(i)}) +
    \sum_{\ell = 1}^{w_i - 1} \operatorname{\#prefix}_\mathcal{C}(w_{(i-1)}^\ell)
  \] is the number of words in $\mathcal{C}$ that are lexicographically later
  than $w_{(i-1)}$ and are less than or equal to $w_{(i)}$.

  By summing over the number of letters of $w$, we count all of the words
  that are less than or equal to $w_{(|w|)} = w$, which is precisely the rank of
  $w$.
\end{proof}

\section{Techniques of rook theory}
Now that we have shown that we can unrank words whenever we can compute the
number of words with a given prefix, we want to develop techniques for this
counting problem.
In the case of unranking derangements and permutations, it is useful to use
ideas from rook theory, which provides a theory for understanding
position-restricted permutations.
Rook theory was introduced by Kaplansky and Riordan \cite{Kaplansky1946}
in their 1946 paper \textit{The Problem of the Rooks and its Applications}. In
it, they discuss problems of restricted permutations in the language of rooks
placed on a chessboard.

\begin{figure}[h]
  \center
  \begin{tikzpicture}[scale = 0.8]
    \foreach \i/\j in {1/3,2/4,3/8,4/1,5/2,6/7,7/5,8/6} {
      \node at (\j - 0.5, 8.5 - \i) {\huge\rook};
      \node at (8.5, 8.5-\i) {\huge\j};
    }
    \draw (0,0) grid (8,8);
  \end{tikzpicture}
  \caption[A permutation corresponding to a rook placement.]{
    An illustration of the rook placement corresponding to the permutation
    $34812756 \in S_8$. A rook is placed in square $(i, \pi(i))$ for each $i$.
  }
  \label{fig:permutationFromRooks}
\end{figure}
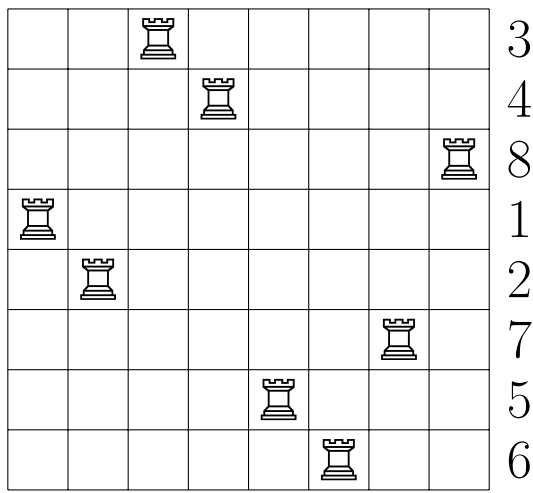

\subsection{Definitions in rook theory}
We begin by introducing some preliminary ideas from rook theory.

\begin{definition}
  A \textbf{board} $B$ is a subset of $[n] \times [n]$ which represents the
  squares of an $n \times n$ chessboard that rooks are allowed to be placed on.
  Every board $B$ has a \textit{complementary board}
  $B^c = ([n] \times [n]) \setminus B$, which consists of all of the
  squares of $B$ that a rook cannot be placed on.
\end{definition}

To each board, we can associate a generating polynomial that keeps track of the
number of ways to place a given number of rooks on the squares of $B$ in such a
way that no two rooks are in the same row or column.

\begin{definition}
  The \textbf{rook polynomial} associated with a board $B$,
  \begin{equation}
    p_B(x) = r_0 + r_1 x + r_2 x^2 + \dots + r_n x^n,
  \end{equation}
  is a generating polynomial where $r_k$ denotes the number of $k$-element subsets
  of $B$ such that no two elements share an $x$-coordinate or a $y$-coordinate.
\end{definition}

In the context of permutations, we're typically interested in $r_n$, the number
of ways to place $n$ rooks on a restricted $n \times n$ board.
However, it turns out that a naive application of the techniques from
rook theory does not immediately allow us to count the number of
restricted permutations with a given prefix.
Computing the number of such permutations is known to be \#P-hard
for a board with arbitrary restrictions.
We can see this by encoding a board $B$ as a $(0,1)$-matrix and computing the matrix
permanent, since there is a bijection between boards and $(0,1)$-matrices.
(In fact, Shevelev \cite{Shevelev1992} claims that
``the theory of enumerating the permutations with restricted positions
stimulated the development of the theory of the permanent.'')

\begin{lemma}
  Let $M_B = \{a_{ij}\}$ be an $n \times n$ matrix where \begin{equation}
    a_{ij} = \begin{cases}
      1 & (i,j) \in B \\
      0 & (i,j) \not\in B
    \end{cases}.
  \end{equation}
  Then the coefficient of $x^n$ in $p_B(x)$ is given by the matrix permanent
  \begin{equation}
    \operatorname{perm}(M_B) = \sum_{\sigma \in S_n} \prod_{i=1}^n a_{i\sigma(i)}.
  \end{equation}
\end{lemma}

Now is an appropriate time to recall Valiant's Theorem.

\begin{theorem}[Valiant's Theorem \cite{Valiant1979}]
  The counting problem of computing the permanent of a (0,1)-matrix is \#P-complete.
\end{theorem}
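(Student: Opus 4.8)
The plan is to establish the two standard components of \#P-completeness separately: membership in \#P, which is routine, and \#P-hardness, which is the substantial part. I would note at the outset that the natural certificate here is a permutation, so the permanent is literally a counting function.

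For membership, I would invoke the lemma just proved: $\operatorname{perm}(M_B)$ counts exactly the permutations $\sigma \in S_n$ with $(i, \sigma(i)) \in B$ for all $i$, equivalently the perfect matchings of the bipartite graph whose biadjacency matrix is $M_B$. Since such a $\sigma$ is a certificate of size $O(n \log n)$ that can be verified in polynomial time, the map $M_B \mapsto \operatorname{perm}(M_B)$ lies in \#P by definition.

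For hardness, I would give a polynomial-time (counting) reduction from \#3SAT, the problem of counting satisfying assignments of a $3$-CNF formula, which is \#P-complete. The central device is the interpretation of the permanent of a weighted directed graph as the weighted sum over its cycle covers: assigning weight $a_{ij}$ to the arc $i \to j$, one has $\operatorname{perm}(A) = \sum_{C} \prod_{e \in C} w(e)$, where $C$ ranges over spanning collections of vertex-disjoint directed cycles (loops included), since these cycle covers are in bijection with permutations. Given a formula $\phi$ with $m$ clauses, I would assemble a weighted digraph from three kinds of gadgets: a variable gadget whose two cycle-cover states encode a truth value, a clause gadget contributing a fixed factor precisely when the clause is satisfied, and an interchange (XOR) gadget enforcing consistency between a variable and its occurrences. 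Tracking the contributions, the permanent of the resulting weighted adjacency matrix equals $4^m$ times the number of satisfying assignments of $\phi$, with the constant fixed by the gadget normalization; negative and large integer edge weights are permitted at this stage.

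The remaining and most delicate step is to pass from this weighted integer permanent to a genuine $(0,1)$-permanent. I would eliminate a large positive edge weight $w$ by replacing the arc with a small $(0,1)$ subgraph carrying exactly $w$ internally vertex-disjoint paths, and eliminate negative weights by computing the entire construction modulo a number $N = 2^{b} + 1$ chosen larger than any permanent value that can arise, so that $-1$ is faithfully represented by $2^{b}$ and the true count is recovered from its residue. The main obstacle is verifying this gadget arithmetic: one must confirm that each substitution preserves the cycle-cover weight of every completion, that the modular reconstruction is exact because the permanent is bounded by a computable function of the input size, and that the final $(0,1)$ matrix and the modulus $N$ both stay polynomial in the length of $\phi$, so that the reduction indeed runs in polynomial time.
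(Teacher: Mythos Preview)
The paper does not supply its own proof of this statement: Valiant's Theorem is quoted as an external result with a citation to \cite{Valiant1979} and is then used only to deduce the \#P-hardness corollary for counting full rook placements. So there is no in-paper argument to compare against.

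Your outline is essentially Valiant's original strategy: \#P membership via the obvious permutation certificate, \#P-hardness via a cycle-cover encoding of a CNF formula using variable, clause, and interchange gadgets over an integer-weighted digraph, followed by gadget substitutions and a modular trick to land in the $(0,1)$ case. As a sketch this is sound and appropriate for a result the paper merely cites. Two small cautions if you were to flesh it out: the multiplicative constant in front of the satisfying-assignment count is not literally $4^m$ in Valiant's construction (it depends on the number of literal occurrences and the specific interchange gadget, and one typically writes it as $4^{t(\phi)}$ for an easily computable $t$), so you should leave it as a known polynomial-time-computable factor rather than pin it to $4^m$; and the ``small $(0,1)$ subgraph carrying exactly $w$ internally vertex-disjoint paths'' replacement must be done so that every cycle cover of the ambient graph extends in exactly $w$ ways through the inserted piece when the arc is used and in exactly one way when it is not, which is slightly stronger than having $w$ paths. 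With those caveats, your plan matches the standard proof that the paper is invoking by citation.
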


\begin{corollary}
  Computing the number of rook placements on an arbitrary $n \times n$ board is
  \#P-hard.
\end{corollary}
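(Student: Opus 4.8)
The plan is to exhibit a polynomial-time many-one reduction from the problem of computing the permanent of a $(0,1)$-matrix to the problem of counting full rook placements, and then to invoke Valiant's Theorem. Since there is nothing subtle to prove here, the work is entirely in checking that the reduction goes in the right direction and that it is efficient.

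First I would recall the bijection between $(0,1)$-matrices and boards used in the preceding lemma: an arbitrary $n \times n$ matrix $M = \{a_{ij}\}$ with entries in $\{0,1\}$ corresponds to the board $B_M = \{(i,j) \in [n] \times [n] : a_{ij} = 1\}$, and this translation is computable in time linear in the size of $M$. The quantity ``number of rook placements on $B_M$'' that we wish to show is hard to compute is the coefficient $r_n$ of $x^n$ in the rook polynomial $p_{B_M}(x)$, i.e.\ the number of ways to place $n$ mutually non-attacking rooks on the allowed squares of $B_M$. Next I would simply apply the lemma, which asserts $r_n = \operatorname{perm}(M_{B_M}) = \operatorname{perm}(M)$. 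Consequently, any algorithm that takes a board $B$ as input and returns its number of full rook placements yields, with only linear overhead for converting $M$ into $B_M$, an algorithm computing $\operatorname{perm}(M)$ for an arbitrary $(0,1)$-matrix $M$.

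Finally I would close the argument: computing the permanent of a $(0,1)$-matrix is $\#P$-complete by Valiant's Theorem, so in particular it is $\#P$-hard; because the problem of counting rook placements on an arbitrary $n \times n$ board is at least as hard under the polynomial-time reduction just described, it too is $\#P$-hard. The only point requiring care is bookkeeping rather than mathematical depth: one must reduce \emph{from} the known-hard permanent problem \emph{to} the rook-counting problem (not the reverse), and one must note that the reduction is in fact the identity map under the matrix--board bijection, hence trivially polynomial time. It is worth remarking in passing that this reduction is parsimonious, so the hardness transfers at the level of exact counting, which is precisely what matters for the prefix-counting obstruction discussed afterward.
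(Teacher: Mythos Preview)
Your proposal is correct and follows exactly the (implicit) argument in the paper: the corollary is stated immediately after the lemma identifying $r_n$ with $\operatorname{perm}(M_B)$ and Valiant's Theorem, with no separate proof, so the intended reasoning is precisely the reduction you spell out. Your added remarks about the direction of the reduction, its triviality under the matrix--board bijection, and its parsimony are all accurate and simply make explicit what the paper leaves to the reader.
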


Therefore, in order to compute the number of permutations, we must exploit some
additional structure of the restrictions.

\subsection{Techniques of rook theory}
Rook polynomials can be computed recursively. The base case is that
for an empty board $B = \emptyset$, the corresponding rook polynomial is
$p_\emptyset(x) = 1$, because there is one way to place no rooks, and no way
to place one or more rooks.
\begin{lemma}[\cite{Riordan1980}]
  Given a board $B$ and a square $(x,y) \in B$, we can define
  two resulting boards from including or excluding the given square:
  \begin{align}
    B_i &= \{(x',y') \in B \mid x \neq x' \text{ and } y \neq y'\} \\
    B_e &= B \setminus {(x,y)}.
  \end{align}
  Then we can write the rook polynomial for $B$ in terms of this decomposition.
  \begin{equation}
    p_B(x) = xp_{B_i}(x) + p_{B_e}(x).
  \end{equation}
  \label{lemma:rookPolynomialRecursion}
\end{lemma}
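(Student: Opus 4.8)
The plan is to prove the identity coefficient by coefficient. Writing $p_B(x) = \sum_{k\ge 0} r_k(B)\, x^k$ and likewise for $B_i$ and $B_e$ (with the convention $r_{-1}(\cdot)=0$ and recalling that $r_0=1$ always), it suffices to establish $r_k(B) = r_{k-1}(B_i) + r_k(B_e)$ for every $k\ge 0$ and then reassemble the generating functions, the factor of $x$ in front of $p_{B_i}$ accounting for the shift in degree. The argument for the coefficient identity is the standard ``delete a distinguished square'' partition of rook placements.

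Concretely, I would fix the distinguished square $s=(x,y)\in B$ and let $\mathcal R_k(B)$ be the set of $k$-element subsets of $B$ with no two elements sharing a coordinate, so $r_k(B)=|\mathcal R_k(B)|$. I would then split $\mathcal R_k(B)$ into the placements that avoid $s$ and the placements that use $s$. The first class is, by definition, exactly $\mathcal R_k(B\setminus\{s\}) = \mathcal R_k(B_e)$, contributing $r_k(B_e)$. For the second class I would send $P\mapsto P\setminus\{s\}$: since every square of $P$ other than $s$ must lie outside row $x$ and outside column $y$ (otherwise it attacks $s$) and these squares are non-attacking among themselves, $P\setminus\{s\}$ lies in $\mathcal R_{k-1}(B_i)$; conversely adjoining $s$ to any $Q\in\mathcal R_{k-1}(B_i)$ produces a valid placement in $\mathcal R_k(B)$ because no square of $Q$ sits in row $x$ or column $y$. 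This is a bijection, so the second class contributes $r_{k-1}(B_i)$. Summing the two classes gives the coefficient identity, and then
\begin{align*}
  p_B(x) = \sum_{k\ge 0} r_k(B)\, x^k
  &= \sum_{k\ge 0} r_{k-1}(B_i)\, x^k + \sum_{k\ge 0} r_k(B_e)\, x^k \\
  &= x\sum_{k\ge 1} r_{k-1}(B_i)\, x^{k-1} + p_{B_e}(x) \\
  &= x\, p_{B_i}(x) + p_{B_e}(x).
\end{align*}

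There is essentially no serious obstacle here; the only point requiring a moment's care is verifying that $P\mapsto P\setminus\{s\}$ is genuinely well defined and bijective onto $\mathcal R_{k-1}(B_i)$ — i.e., that stripping away the \emph{entire} row $x$ and column $y$ from $B$ discards no square of such a $P$ besides $s$ itself and introduces no new conflicts. The edge cases are immediate: when $k=0$ both sides count the empty placement, and when $s$ is the unique square of $B$ we get $B_i=B_e=\emptyset$ and the recursion reduces to $p_B(x)=x+1$, as expected.
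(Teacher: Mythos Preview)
Your proof is correct and is the standard argument. Note that the paper does not actually supply a proof of this lemma; it simply cites Riordan's textbook for the result, and the bijective ``include or exclude the distinguished square'' partition you give is precisely the classical proof found there.
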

If we want to compute a rook polynomial using this construction, we can end
up adding up lots of smaller rook polynomials---a number that is exponential in
the size of $B$.
When the number of squares that are missing from $B$ is small,
it can be easier to compute the rook polynomial of the complementary board,
$p_{B^c}$, and use the principle of
inclusion/exclusion on its coefficients to determine the rook polynomial for
the original board, $B$.

In the case of derangements and m\'enage permutations, this is the strategy
we'll use.
We will start by finding the resulting board from a given prefix,
find the rook polynomial of the complementary board, and
use the principle of inclusion/exclusion to determine the number of ways to
place rooks in the resulting board.

\section{Unranking derangements}
\label{sec:unrankingDerangements}

\subsection{Overview for unranking derangements}
Richard Arratia's question focused on unranking derangements written as words
in lexicographic order.
Other authors have looked at unranking derangements based on other total
orderings. In particular, Mikawa and Tanaka \cite{Mikawa2014} give an algorithm
to rank/unrank derangements
with respect to \textit{lexicographic ordering in cycle notation}.

In this section we will develop an algorithm for ranking and unranking
derangements with respect to their lexicographic ordering as words. The
technique that we use will broadly be re-used in the next section.
It is worthwhile to begin by recalling the definition of a derangement.
\begin{definition}
  A \textbf{derangement} is a permutation $\pi \in S_n$ such that $\pi$ has no
  fixed points. That is, the set of derangements on $n$ letters is \begin{equation}
    \mathcal{D}_n = \{\pi \in S_n \mid \pi(i) \neq i\ \forall i \in [n]\}.
  \end{equation}
\end{definition}

\subsection{The complementary board}
In order to compute the number of derangements with a given prefix, it is
useful to look at the board that results after placing $\ell$ rooks according to
these positions, as illustrated in Figure \ref{fig:derangementPrefix}.

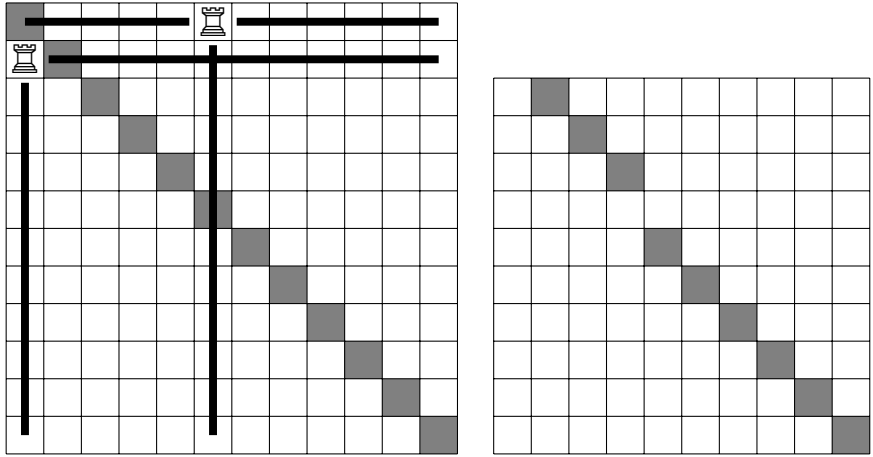
\begin{figure}
  \center
  \begin{tikzpicture}[scale = 0.5]
    \path (0,-0.5) -- (0,6.5);
    \foreach \i in {0,...,11} { \fill[gray] (\i, 12-\i) rectangle (\i + 1, 11 - \i); }
    \draw (0,0) grid (12,12);
    \node (R1) at (5.5, 11.5) {\Large\rook};
    \node (R2) at (0.5, 10.5) {\Large\rook};
    \draw[line width = 3]
      (0.5,11.5) -- (R1) -- (11.5,11.5)
      (R1) -- (5.5,0.5)
    ;
    \draw[line width = 3]
      (R2) -- (11.5,10.5)
      (R2) -- (0.5,0.5)
    ;
  \end{tikzpicture}
  ~~
  \begin{tikzpicture}[scale = 0.5]
    \path (0,-0.5) -- (0,6.5);
    \foreach \i in {1,...,3} { \fill[gray] (\i, 11-\i) rectangle (\i + 1, 10 - \i); }
    \foreach \i in {4,...,9} { \fill[gray] (\i, 10-\i) rectangle (\i + 1, 9 - \i); }
    \draw (0,0) grid (10,10);
  \end{tikzpicture}
  \caption[The derived board for a prefix of a derangement.]{An example of a prefix $\alpha = (6, 1)$, and the board that results
  from deleting the first $\ell = 2$ rows and columns $6$ and $1$.
  The derived complementary board of $B$ from $\alpha$ is
  $B^c_\alpha = \{(1,2), (2,3), (3,4), (5,5), \dots, (10,10)\}$.}
\label{fig:derangementPrefix}
\end{figure}

\begin{definition}
  If $B$ is an $n \times n$ board, and
  $\alpha = (\alpha_1, \alpha_2, \dots, \alpha_\ell)$ is a valid prefix of length
  $\ell$, the \textbf{derived board} of $B$ from $\alpha$,
  denoted $B_\alpha$,
  is constructed by removing
  rows $1, 2, \dots, \ell$ and
  columns $\alpha_1, \alpha_2, \dots, \alpha_\ell$ from $B$,
  reindexing in such a way that both the row and column indexes are in
  $[n - \ell]$.

  The \textbf{derived complementary board} $B_\alpha^c$ is the complement of
  $B_\alpha$ with respect to $[n - \ell] \times [n - \ell]$.
\end{definition}

Given a prefix of length $\ell$, the number of ways of placing $n - \ell$ rooks
on the derived board $B_\alpha$ is, by construction,
equal to the number of words in $\mathcal{C}$ with prefix $\alpha$
\begin{lemma}
  Given a valid $\ell$-letter prefix $(\alpha_1, \alpha_2, \dots, \alpha_\ell)$
  of a word on $n$ letters,
  the number of squares in the derived complementary board is \begin{equation}
    |B_\alpha^c| = n - \ell - |\{\ell+1, \ell+2, \dots, n\} \cap \{\alpha_1, \alpha_2, \dots, \alpha_\ell\}|,
  \end{equation} and no two of these squares are in the same row or column.
  \label{lemma:derangementComplementSize}
\end{lemma}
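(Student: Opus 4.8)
The plan is to analyze the derived complementary board $B^c_\alpha$ directly from the definition of the derangement board $B = \{(i,j) : i \neq j\}$ and the derived board construction. First I would recall that for derangements, the full board $B$ consists of all squares off the main diagonal, so its complement $B^c$ in $[n] \times [n]$ is exactly the diagonal $\{(1,1), (2,2), \dots, (n,n)\}$, which has $n$ squares, no two sharing a row or column. The derived complementary board $B^c_\alpha$ is obtained from $B^c$ by deleting rows $1, \dots, \ell$ and columns $\alpha_1, \dots, \alpha_\ell$, then reindexing; so $B^c_\alpha$ consists of the surviving diagonal squares $(m,m)$ with $m \in \{\ell+1, \dots, n\} \setminus \{\alpha_1, \dots, \alpha_\ell\}$, suitably reindexed. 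Since a subset of the original diagonal squares can only lose the row/column-disjointness property through reindexing, and the reindexing is a bijection on both coordinates (the same one on rows as on columns, by the symmetry of which indices survive — here I need to be a little careful), the surviving squares remain pairwise non-attacking.

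The key counting step is then a short cardinality computation: the surviving diagonal indices are $\{\ell+1, \dots, n\} \setminus \{\alpha_1, \dots, \alpha_\ell\}$, a set of size
\[
  (n - \ell) - \bigl|\{\ell+1, \dots, n\} \cap \{\alpha_1, \dots, \alpha_\ell\}\bigr|,
\]
which is exactly the claimed formula. Here I would use that $\alpha$ is a \emph{valid} prefix — meaning $\alpha_1, \dots, \alpha_\ell$ are distinct (they are the first $\ell$ values of a permutation) and $\alpha_k \neq k$ for each $k \leq \ell$ (the derangement condition on the prefix) — so that $\{\alpha_1, \dots, \alpha_\ell\} \cap \{1, \dots, \ell\}$ may be nonempty but the $\alpha_k$ are genuinely $\ell$ distinct values; the intersection with the \emph{tail} $\{\ell+1, \dots, n\}$ is what subtracts from $n - \ell$.

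The main obstacle is making the reindexing argument for the non-attacking claim fully precise: after deleting rows $1,\dots,\ell$ the surviving rows are $\ell+1, \dots, n$, reindexed to $1, \dots, n-\ell$ in order; after deleting columns $\alpha_1, \dots, \alpha_\ell$ the surviving columns are $[n] \setminus \{\alpha_1, \dots, \alpha_\ell\}$, reindexed to $1, \dots, n-\ell$ in order. These two reindexing maps are generally \emph{different}, so a diagonal square $(m,m)$ of $B^c$ need not map to a diagonal square of $B^c_\alpha$ — but that is fine: what we must check is only that no two surviving squares collide in a row or column after reindexing, and since each reindexing map is injective and the original squares were already in distinct rows and distinct columns, injectivity is preserved. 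I would write this out in one or two sentences and conclude. A secondary point worth a remark is confirming that every surviving square of $B^c_\alpha$ lies in the diagonal set described and that $(m,m) \in B^c$ forces $m \notin \{\alpha_1, \dots, \alpha_\ell\}$ for the square to survive column deletion and $m > \ell$ to survive row deletion, which is exactly the index set above.
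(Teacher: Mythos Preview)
Your proposal is correct and follows essentially the same route as the paper: take the complement first (obtaining the diagonal), then delete rows $1,\dots,\ell$ and columns $\alpha_1,\dots,\alpha_\ell$, and count what survives; the non-attacking property is inherited because the original diagonal has it and deletion plus order-preserving reindexing cannot create collisions. The only cosmetic difference is that the paper counts the \emph{deleted} diagonal squares (first $\ell$ rows, then the extra columns with $\alpha_j>\ell$, avoiding double counting) whereas you count the \emph{surviving} ones directly as $\{\ell+1,\dots,n\}\setminus\{\alpha_1,\dots,\alpha_\ell\}$; your extra care in noting that the row- and column-reindexing maps may differ but are each injective is a welcome clarification that the paper leaves implicit.
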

\begin{proof}
  Notice that the derived complementary board can be constructed in a different
  order: by first taking the complement, then deleting rows and columns, and
  finally reindexing the squares.
  Because the complementary board has no two squares in the same row or column,
  deleting and reindexing results in a derived complementary board with the same
  property.

  Thus, we only need to classify which squares in the complementary board are
  deleted to make the derived complementary board.
  We start by deleting $\ell$ squares corresponding to the deletion of the first
  $\ell$ rows, namely $(1, 1), (2, 2), \dots, (\ell, \ell)$.

  Some of these squares may also be in columns
  $\alpha_1, \alpha_2, \dots, \alpha_\ell$, but to avoid double-counting, we
  only consider those letters that are greater than $\ell$. These are
  $|\{\ell+1, \ell+2, \dots, n\} \cap \{\alpha_1, \alpha_2, \dots, \alpha_\ell\}|$,
  as desired.
\end{proof}

\subsection{Derangements with a given prefix}
Now that we have a way of quickly computing $|B_\alpha^c|$, we can compute the
number of ways to place a given number of rooks on the complementary board.
We can use this
to compute the rook polynomial for the derived complementary board
$p_{B_\alpha^c}(x)$.
We will see later that we can use the coefficients of this polynomial to compute
the number of ways of placing $n - \ell$ rooks on the derived board $B_\alpha$.

\begin{lemma}
  The rook polynomial for the complementary board $B_\alpha^c$ is \begin{equation}
    p_{B_\alpha^c}(x) = \sum_{j = 0}^{|B_\alpha^c|} \binom{|B_\alpha^c|}{j}x^j.
  \end{equation}
\end{lemma}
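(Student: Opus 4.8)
The key structural fact, already established in Lemma~\ref{lemma:derangementComplementSize}, is that the derived complementary board $B_\alpha^c$ consists of exactly $|B_\alpha^c|$ squares, \emph{no two of which share a row or a column}. The plan is to exploit this directly: a board whose squares are pairwise non-attacking is, combinatorially, just a scattering of isolated squares, and placing $k$ non-attacking rooks on it amounts to nothing more than choosing which $k$ of the squares to occupy.

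First I would make that observation precise. Let $m = |B_\alpha^c|$ and enumerate the squares of $B_\alpha^c$ as $s_1, \dots, s_m$. Since no two $s_a, s_b$ lie in a common row or column, \emph{every} $k$-element subset $\{s_{a_1}, \dots, s_{a_k}\}$ is automatically a valid non-attacking rook placement: the non-attacking condition is vacuous because it only constrains pairs sharing a coordinate, and there are none. Conversely, any non-attacking placement of $k$ rooks on $B_\alpha^c$ is in particular a $k$-element subset of its squares. Hence the coefficient $r_k$ of $x^k$ in $p_{B_\alpha^c}(x)$, which by definition counts $k$-element subsets of the board with no two elements sharing a coordinate, is simply the total number of $k$-element subsets of an $m$-element set, namely $\binom{m}{k} = \binom{|B_\alpha^c|}{j}$ with $j = k$.

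Assembling this over all $k$ from $0$ to $m$ gives
\[
  p_{B_\alpha^c}(x) = \sum_{k=0}^{m} r_k x^k = \sum_{j=0}^{|B_\alpha^c|} \binom{|B_\alpha^c|}{j} x^j,
\]
as claimed. (One can also phrase the same computation recursively via Lemma~\ref{lemma:rookPolynomialRecursion}: picking any square $(x,y) \in B_\alpha^c$, the board $B_i$ obtained by deleting its row and column removes that one square and nothing else, so $B_i = B_\alpha^c \setminus \{(x,y)\} = B_e$; the recursion becomes $p_{B_\alpha^c}(x) = (x+1)\, p_{B_e}(x)$, and an induction on $m$ yields $p_{B_\alpha^c}(x) = (x+1)^m$, which is the binomial sum above.)

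There is essentially no obstacle here; the only thing to be careful about is not conflating the polynomial's variable $x$ with the coordinate name $x$ used in the board notation, and making sure the quantifier ``no two squares share a coordinate'' is invoked with the right direction of implication in both halves of the counting argument. The substantive work was already done in proving that $B_\alpha^c$ has the non-attacking property; given that, this lemma is a one-line consequence.
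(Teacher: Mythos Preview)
Your proof is correct and follows essentially the same approach as the paper: invoke the non-attacking property of $B_\alpha^c$ from Lemma~\ref{lemma:derangementComplementSize}, observe that every $j$-subset of squares is therefore a valid placement, and read off $r_j = \binom{|B_\alpha^c|}{j}$. The paper's version is a two-sentence summary of your main argument; your parenthetical recursive derivation via $(x+1)^m$ is a nice alternative but not present in the paper.
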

\begin{proof}
  Recall that no two squares of $B^c_\alpha$ are in the same row or column.
  Thus the number of ways to place $j$ rooks is equivalent to selecting any
  $j$ squares from the collection of $|B^c_\alpha|$ squares.

  Therefore the coefficient of $x^j$ in the rook polynomial is
  $\binom{|B_\alpha^c|}{j}$.
\end{proof}

Now we introduce a lemma of Stanley \cite{Stanley2011EC1} to compute the number
of ways of placing $n - \ell$ rooks in the derived board
$B_\alpha \subseteq [n - \ell] \times [n - \ell]$.

\begin{lemma}[\cite{Stanley2011EC1}]
  Let $B \subseteq [n] \times [n]$ be a board with complementary board $B^c$,
  and denote the rook polynomial of $B^c$ by
  $P_{B^c}(x) = \sum_{k=0}^n r^c_k x^k$.

  Then the number of ways, $N_0$, of placing $n$ nonattacking rooks on $B$
  is given by the principle of inclusion/exclusion
  \begin{equation}
    N_0 = \sum_{k=0}^n (-1)^k r^c_k (n-k)!.
  \end{equation}
  \label{lemma:CountsFromComplementaryPolynomials}
\end{lemma}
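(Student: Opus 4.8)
The plan is to prove this classical inclusion/exclusion identity by counting, for each rook placement that avoids $B^c$, its contribution to an alternating sum indexed by placements that *do* use squares of $B^c$. First I would set up the ground set: let $\Pi$ be the set of all full placements of $n$ nonattacking rooks on $[n]\times[n]$, i.e. permutation matrices, so $|\Pi| = n!$. For a fixed $\pi \in \Pi$, write $b(\pi) = |\pi \cap B^c|$ for the number of its rooks that land on forbidden squares; then $N_0$ is precisely the number of $\pi$ with $b(\pi)=0$. The key combinatorial observation is that $r^c_k$ counts $k$-subsets $S$ of $B^c$ with no two squares sharing a row or column, and that each such $S$ can be completed to a full permutation in exactly $(n-k)!$ ways (place the remaining $n-k$ rooks arbitrarily on the complementary $(n-k)\times(n-k)$ grid of unused rows and columns). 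Hence $r^c_k (n-k)! = \#\{(S,\pi) : |S| = k,\ S \subseteq \pi \cap B^c\} = \sum_{\pi \in \Pi} \binom{b(\pi)}{k}$.

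Next I would substitute this into the claimed right-hand side and interchange the order of summation:
\begin{equation}
  \sum_{k=0}^n (-1)^k r^c_k (n-k)! = \sum_{k=0}^n (-1)^k \sum_{\pi \in \Pi} \binom{b(\pi)}{k} = \sum_{\pi \in \Pi} \sum_{k=0}^n (-1)^k \binom{b(\pi)}{k}.
\end{equation}
The inner sum is $\sum_{k \geq 0} (-1)^k \binom{b(\pi)}{k}$, which equals $(1-1)^{b(\pi)} = [\,b(\pi) = 0\,]$ by the binomial theorem — it is $1$ when $b(\pi) = 0$ and $0$ otherwise. (Here one uses that $\binom{b}{k} = 0$ for $k > b$, so extending the sum to all $k$ changes nothing.) Therefore the right-hand side collapses to $\#\{\pi \in \Pi : b(\pi) = 0\} = N_0$, which is exactly the number of ways of placing $n$ nonattacking rooks on $B$.

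The only step requiring real care is the bijective count $r^c_k(n-k)! = \sum_\pi \binom{b(\pi)}{k}$, and I expect that to be the main obstacle to state cleanly: one must check that extending a nonattacking $k$-subset $S \subseteq B^c$ to a permutation really does have $(n-k)!$ completions regardless of $S$ (this holds because $S$, being nonattacking, occupies exactly $k$ distinct rows and $k$ distinct columns, leaving an $(n-k)\times(n-k)$ subgrid on which any bijection works), and that the double-counting identity is symmetric in the two ways of enumerating the pairs $(S,\pi)$. Everything after that is the standard $(1-1)^m$ inclusion/exclusion collapse and is routine. This argument is exactly the rook-theoretic refinement of the classical derangement formula, specializing to $B = $ the off-diagonal board when $B^c$ is the diagonal.
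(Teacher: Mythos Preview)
Your argument is correct and is exactly the standard inclusion/exclusion proof of this identity. Note, however, that the paper does not supply its own proof of this lemma: it is stated with a citation to Stanley's \emph{Enumerative Combinatorics} and used as a black box. The proof you have written is essentially the one found there (double-count pairs $(S,\pi)$ with $S\subseteq\pi\cap B^c$ a nonattacking $k$-set, then collapse via $\sum_k(-1)^k\binom{b}{k}=[b=0]$), so there is nothing to compare against and nothing to correct.
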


This lemma allows us to compute the number of rook placements on the derived
board $B_\alpha$, which is the number of derangements in $\mathcal{D}$ that
begin with the prefix $\alpha$.
\begin{corollary}
  The number of derangements with prefix
  $\alpha = (\alpha_1, \alpha_2, \dots, \alpha_\ell)$
  is given by \begin{equation}
    \operatorname{\#prefix}_\mathcal{D}(\alpha)
    = \sum_{j=0}^{|B_\alpha^c|} (-1)^j \binom{|B_\alpha^c|}{j}(n-\ell-j)!,
  \end{equation} which is $\operatorname{A047920}(n-\ell, |B_\alpha^c|)$ in
  the On-Line Encyclopedia of Integer Sequences \cite{oeis}.
\label{cor:derangementsWithPrefix}
\end{corollary}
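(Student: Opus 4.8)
The plan is to chain together the three results immediately preceding the statement. First I would make the reduction to rook placements precise: by the construction of the derived board $B_\alpha$, a derangement of $[n]$ that begins with the prefix $\alpha = (\alpha_1,\dots,\alpha_\ell)$ is exactly a placement of $n$ non-attacking rooks on the derangement board of $S_n$ that occupies the squares $(1,\alpha_1),\dots,(\ell,\alpha_\ell)$; deleting rows $1,\dots,\ell$ and columns $\alpha_1,\dots,\alpha_\ell$ and reindexing carries the remaining $n-\ell$ rooks bijectively onto placements of $n-\ell$ non-attacking rooks on $B_\alpha \subseteq [n-\ell]\times[n-\ell]$. Hence $\#\operatorname{prefix}_\mathcal{D}(\alpha)$ equals the number $N_0$ of ways to place $n-\ell$ non-attacking rooks on $B_\alpha$.

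Next I would invoke Lemma \ref{lemma:CountsFromComplementaryPolynomials} for the board $B_\alpha$ with ambient size $n-\ell$, which writes $N_0$ by inclusion/exclusion as
\[
  N_0 = \sum_{k=0}^{n-\ell} (-1)^k r^c_k\,(n-\ell-k)!,
\]
where $r^c_k$ are the rook numbers of the derived complementary board $B_\alpha^c$. Then I would substitute the closed form from the preceding lemma: since no two squares of $B_\alpha^c$ share a row or column, $r^c_k = \binom{|B_\alpha^c|}{k}$ for $0 \le k \le |B_\alpha^c|$ and $r^c_k = 0$ for $k > |B_\alpha^c|$. This collapses the summation range from $n-\ell$ down to $|B_\alpha^c|$ and produces exactly
\[
  \#\operatorname{prefix}_\mathcal{D}(\alpha) = \sum_{j=0}^{|B_\alpha^c|} (-1)^j \binom{|B_\alpha^c|}{j}(n-\ell-j)!,
\]
where Lemma \ref{lemma:derangementComplementSize} guarantees $|B_\alpha^c| \le n-\ell$ so that every factorial is well defined. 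For the OEIS identification I would just note that, as a function of the two parameters $m = n-\ell$ and $s = |B_\alpha^c|$, the right-hand side is literally the defining expression $\sum_{j=0}^{s}(-1)^j\binom{s}{j}(m-j)!$ of the triangle $\operatorname{A047920}$, giving $\#\operatorname{prefix}_\mathcal{D}(\alpha) = \operatorname{A047920}(n-\ell,|B_\alpha^c|)$.

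There is no real obstacle here—the argument is a substitution—so the only point needing a careful sentence is the first reduction, namely checking that the derived board $B_\alpha$ faithfully records the no-fixed-point restriction on the un-prefixed positions, together with the small bookkeeping remark that $r^c_k$ vanishes for $k > |B_\alpha^c|$ so the two summation ranges agree. Everything else follows directly from the cited lemmas.
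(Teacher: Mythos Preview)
Your proposal is correct and follows exactly the route the paper intends: the corollary is stated without an explicit proof precisely because it is the direct substitution of the binomial rook polynomial of $B_\alpha^c$ into Lemma~\ref{lemma:CountsFromComplementaryPolynomials}, together with the observation (stated just before Lemma~\ref{lemma:derangementComplementSize}) that full rook placements on $B_\alpha$ are in bijection with derangements having prefix~$\alpha$. Your write-up is in fact more careful than the paper's, since you make explicit the vanishing of $r^c_k$ for $k>|B_\alpha^c|$ that justifies truncating the sum.
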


Because we can compute $|B_\alpha^c|$ from $\alpha$ in linear time
(see Lemma \ref{lemma:derangementComplementSize}), if we use a computational
model where factorials are given by an oracle and arithmetic can be computed
in constant time, then $\operatorname{\#prefix}_\mathcal{D}$ can be computed
in linear time with respect to $\ell$, the length of the prefix.

\begin{example}
  For $n = 12$, we wish to count the number of derangements that
  start with the prefix $\alpha = (6,1)$, as illustrated in Figure
  \ref{fig:derangementPrefix}.
  Since the prefix has two letters, $\ell = 2$ and $n - \ell = 12 - 2 = 10$.
  The number of squares in $B_\alpha^c$ is \begin{equation}
    |B_\alpha^c| = 12 - 2 - \underbrace{|\{3,4,\dots,12\} \cap \{6, 1\}|}_1 = 9.
  \end{equation}
  Thus there are $A047920(10,9) = 1\,468\,457$ derangements in $S_{12}$
  that start with the prefix $\alpha = (6,1)$.
\end{example}

Now that we have an efficient algorithm for computing
$\operatorname{\#prefix}_\mathcal{D} \colon \pazocal{W} \rightarrow \mathbb{N}_{\geq 0}$,
we can invoke the recursive formula in Theorem \ref{theorem:unrankFromPrefix} to
compute
$\operatorname{unrank}_\mathcal{D} \colon \mathbb{N}_{\geq 0} \rightarrow \mathcal{D}$
and unrank derangements.
The sequence of recursive steps is illustrated in
Table \ref{table:unrankDerangement}.
\begin{table}
\center
\begin{tabular}{|l|r|l|c|l|}
  \hline
  $\alpha$ (prefix)
    & $\operatorname{\#prefix}_\mathcal{D}(\alpha)$
    & index range
    & $|B_\alpha^c|$
    & $\operatorname{unrank}_\mathcal{D}(1000)$
  \\ \hline
  $1       $ & $0$    & $(0,0]$           & $-$ & $f^{\mathcal{D}}_{1000}(1, 0)$          \\
  $2       $ & $2119$ & $(0,2119]$        & $6$ & $f^{\mathcal{D}}_{1000}(2, 0)$          \\
  \hline
  $21      $ & $265$  & $(0, 265]$        & $6$ & $f^{\mathcal{D}}_{1000}(21, 0)$         \\
  $22      $ & $0$    & $(265, 265]$      & $-$ & $f^{\mathcal{D}}_{1000}(22, 265)$       \\
  $23      $ & $309$  & $(265, 574]$      & $5$ & $f^{\mathcal{D}}_{1000}(23, 265)$       \\
  $24      $ & $309$  & $(574, 883]$      & $5$ & $f^{\mathcal{D}}_{1000}(24, 574)$       \\
  $25      $ & $309$  & $(883, 1192]$     & $5$ & $f^{\mathcal{D}}_{1000}(25, 883)$       \\
  \hline
  $251     $ & $53$   & $(883, 936]$      & $4$ & $f^{\mathcal{D}}_{1000}(251, 883)$      \\
  $253     $ & $0$    & $(936, 936]$      & $-$ & $f^{\mathcal{D}}_{1000}(253, 936)$      \\
  $254     $ & $64$   & $(936, 1000]$     & $3$ & $f^{\mathcal{D}}_{1000}(254, 936)$      \\
  \hline
  $2541    $ & $11$   & $(936, 947]$      & $3$ & $f^{\mathcal{D}}_{1000}(2541, 936)$     \\
  $2543    $ & $11$   & $(947, 958]$      & $3$ & $f^{\mathcal{D}}_{1000}(2543, 947)$     \\
  $2546    $ & $14$   & $(958, 972]$      & $2$ & $f^{\mathcal{D}}_{1000}(2546, 958)$     \\
  $2547    $ & $14$   & $(972, 986]$      & $2$ & $f^{\mathcal{D}}_{1000}(2547, 972)$     \\
  $2548    $ & $14$   & $(986, 1000]$     & $2$ & $f^{\mathcal{D}}_{1000}(2548, 986)$     \\
  \hline
  $25481   $ & $3$    & $(986, 989]$      & $2$ & $f^{\mathcal{D}}_{1000}(25481, 986)$    \\
  $25483   $ & $3$    & $(989, 992]$      & $2$ & $f^{\mathcal{D}}_{1000}(25483, 989)$    \\
  $25486   $ & $4$    & $(992, 996]$      & $1$ & $f^{\mathcal{D}}_{1000}(25486, 992)$    \\
  $25487   $ & $4$    & $(996, 1000]$     & $1$ & $f^{\mathcal{D}}_{1000}(25487, 996)$    \\
  \hline
  $254871   $ & $2$   & $(996, 998]$      & $0$ & $f^{\mathcal{D}}_{1000}(254871, 996)$   \\
  $254873   $ & $2$   & $(998, 1000]$     & $0$ & $f^{\mathcal{D}}_{1000}(254873, 998)$   \\
  \hline
  $2548731  $ & $1$   & $(998, 999]$      & $0$ & $f^{\mathcal{D}}_{1000}(2548731, 998)$  \\
  $2548736  $ & $1$   & $(999, 1000]$     & $0$ & $f^{\mathcal{D}}_{1000}(2548736, 999)$  \\
  \hline
  $25487361 $ & $1$   & $(999, 1000]$     & $0$ & $f^{\mathcal{D}}_{1000}(25487361, 999)$ \\
  \hline
\end{tabular}
\caption[Steps for computing the $1000$th derangement in $S_8$]{
  There are $A000166(8) = 14833$ derangements on $8$ letters.
  The table shows the recursive steps to find that the derangement at index
  $1000$ is $25487361$.
}
\label{table:unrankDerangement}
\end{table}

\section{Unranking m\'enage permutations}
\label{sec:unrankingMenage}
With the bounding on unranking derangements claimed, Richard proposed a bounty
for another family of restricted permutations, namely m\'enage permutations.

A m\'enage permutation comes from the \textit{probl\`eme des m\'enages},
introduced by \'Edouard Lucas in 1891.
There are a few choices of how to define these permutations, but we will
use the following definition for simplicity.
\begin{definition}
  A \textbf{m\'enage permutation} is a permutation $\pi \in S_n$ such that for
  all $i \in [n]$,
  $\pi(i) \neq i$ and
  $\pi(i) + 1 \not\equiv i \pmod n$.
  The set of m\'enage permutations of length $n$ is denoted by $\pazocal{M}_n$.
\end{definition}

\subsection{Overview for unranking m\'enage permutations}

As in the section about unranking derangements, we will use the fact from
Theorem \ref{theorem:unrankFromPrefix} that if we can efficiently count the
number of words with a given prefix, then we can efficiently unrank the words.

The technique exploits the following observations:
after placing rooks on a board corresponding to our
prefix, the remaining board has the property that its complement
can be partitioned into sub-boards that do not share rows or columns.
These sub-boards have a structure that we can understand,
and we can leverage that understanding to compute the rook polynomials of these
sub-boards and consequently of the complementary board itself. Once we have computed
the rook polynomial of the complementary board, we can again use
Lemma \ref{lemma:CountsFromComplementaryPolynomials}
to compute the number of full rook placements on the original board.
This gives us the number of m\'enage permutations with a given prefix.

\subsection{Disjoint board decomposition}
Figure \ref{fig:menageMultiplePlacement}
suggestively shows a placement of rooks according to a prefix that
results in a board whose complement can be partitioned into
sub-boards whose squares don't share any rows or columns.
We will see that this property indeed holds in general,
and we can exploit this in order to count the m\'enage permutations
with a given prefix.

\begin{figure}[ht!]
  \centering
  \begin{tikzpicture}[scale = 0.4]
    \path (0,-0.5) -- (0,6.5);
    \foreach \i in {0,...,11} { \fill[gray] (\i, 12-\i) rectangle (\i + 1, 11 - \i); }
    \foreach \i in {0,...,10} { \fill[gray] (\i, 11-\i) rectangle (\i + 1, 10 - \i); }
    \fill[gray] (11,11) rectangle (12,12);
    \draw (0,0) grid (12,12);
    \node (R1) at (2.5, 11.5) {\Large\rook};
    \node (R2) at (5.5, 10.5) {\Large\rook};
    \node (R3) at (0.5, 9.5) {\Large\rook};
    \node (R4) at (7.5, 8.5) {\Large\rook};
    \draw[line width = 3]
      (0.5,11.5) -- (R1) -- (11.5,11.5)
      (R1) -- (2.5,0.5)
    ;
    \draw[line width = 3]
      (0.5,10.5) -- (R2) -- (11.5,10.5)
      (R2) -- (5.5,0.5)
    ;
    \draw[line width = 3]
      (R3) -- (11.5,9.5)
      (R3) -- (0.5,0.5)
    ;
    \draw[line width = 3]
      (0.5,8.5) -- (R4) -- (11.5,8.5)
      (R4) -- (7.5,0.5)
    ;
    \foreach \i/\j in {1/7, 2/6, 2/7, 3/4, 3/5, 4/2, 4/3, 5/2, 5/1, 6/1, 6/0, 7/0} {
      \fill[gray, xshift=13cm] (\i, \j) rectangle (\i + 1, \j + 1);
    }
    \draw[xshift=13cm] (0,0) grid (8,8);
    \fill[xshift=22cm, gray] (0,0) rectangle (8,8);
    \foreach \i/\j in {1/7, 2/6, 2/7, 3/4, 3/5, 4/2, 4/3, 5/2, 5/1, 6/1, 6/0, 7/0} {
      \fill[white, xshift=22cm] (\i, \j) rectangle (\i + 1, \j + 1);
    }
    \draw[xshift=22cm] (0,0) grid (8,8);
  \end{tikzpicture}
  \caption[An example of a derived complementary board.]{
    The prefix $\alpha = (3,6,1,8)$,
    the derived board $B_\alpha$, and
    the derived complementary board
    $B_\alpha^c = \mathcal{O}_3 \sqcup \mathcal{E}_2 \sqcup \mathcal{O}_7^\intercal$.
    There are $8062$ ways of placing eight nonattacking rooks on $B_\alpha$.
  }
  \label{fig:menageMultiplePlacement}
\end{figure}
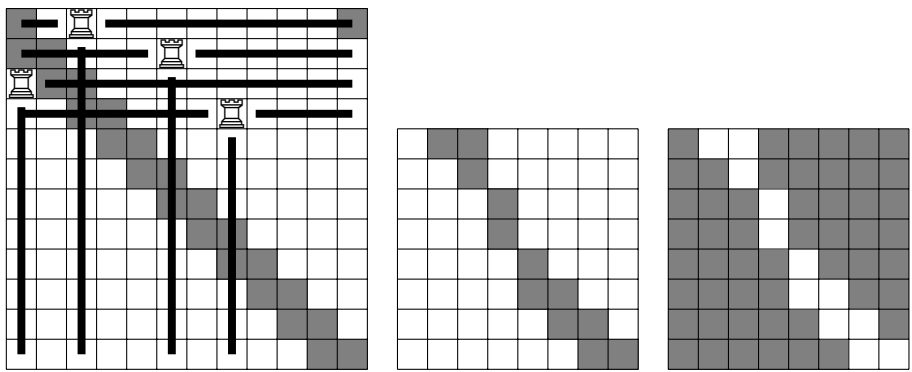

The property of complements that can be partitioned into sub-boards
whose squares don't share rows or columns is useful because it provides
a way of factoring the rook polynomial of the bigger board into the rook
polynomials of the sub-boards.
\begin{definition}
  Two sub-boards $B$ and $B'$ are called \textbf{disjoint} if no squares of $B$ are
  in the same row or column as any square in $B'$.
\end{definition}

Kaplansky gives a way of computing the rook polynomial of a board in terms of
its disjoint boards.
\begin{theorem}[\cite{Kaplansky1946}]
  If $B$ can be partitioned into disjoint boards $b_1, b_2, \dots, b_m$,
  then the rook polynomial of $B$ is the product of the rook polynomials of
  each sub-board \begin{equation}
    p_B(x) = \prod_{i=1}^m p_{b_i}(x).
  \end{equation}
\label{thm:productOfDisjointBoards}
\end{theorem}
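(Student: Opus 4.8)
The plan is to prove the identity coefficient by coefficient, by exhibiting a bijection that realizes every non-attacking placement of $k$ rooks on $B$ as a tuple of independent non-attacking placements, one on each disjoint piece $b_i$. Write $p_{b_i}(x) = \sum_{j \geq 0} r_j^{(i)} x^j$, so that the coefficient of $x^k$ in $\prod_{i=1}^m p_{b_i}(x)$ is $\sum_{k_1 + \cdots + k_m = k} \prod_{i=1}^m r_{k_i}^{(i)}$. If $p_B(x) = \sum_k r_k x^k$, it therefore suffices to show $r_k = \sum_{k_1 + \cdots + k_m = k} \prod_{i=1}^m r_{k_i}^{(i)}$ for every $k$.

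First I would set up the correspondence. Given a non-attacking placement $P \subseteq B$ with $|P| = k$: since $b_1, \dots, b_m$ partition $B$, each square of $P$ lies in exactly one $b_i$, so setting $P_i = P \cap b_i$ and $k_i = |P_i|$ gives $k_1 + \cdots + k_m = k$, and each $P_i$, being a subset of the non-attacking set $P$, is itself non-attacking and hence counted by $r_{k_i}^{(i)}$. Conversely, given non-attacking placements $P_i \subseteq b_i$ with $|P_i| = k_i$, I would form $P = P_1 \cup \cdots \cup P_m$; the union is disjoint since the $b_i$ are, so $|P| = k$. The one thing to verify is that $P$ is non-attacking: two rooks lying in the same $P_i$ do not attack by hypothesis, while two rooks in different pieces $P_i$ and $P_j$ occupy squares of $b_i$ and $b_j$, which by the definition of disjoint boards share neither a row nor a column, so they do not attack either. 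The two constructions are mutually inverse, giving a bijection between $k$-element non-attacking subsets of $B$ and tuples $(P_1, \dots, P_m)$ of non-attacking subsets with $|P_i| = k_i$ and $\sum_i k_i = k$.

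Summing over all ways of distributing the $k$ rooks among the pieces then yields $r_k = \sum_{k_1 + \cdots + k_m = k} \prod_{i=1}^m r_{k_i}^{(i)}$, which is exactly the coefficient of $x^k$ in $\prod_{i=1}^m p_{b_i}(x)$. As this holds for all $k$, the two polynomials coincide. Alternatively, one could induct on $m$, peeling off $b_m$ and applying the two-board case, with the two-board case extracted from the deletion--contraction recursion of Lemma \ref{lemma:rookPolynomialRecursion}; but the direct bijective argument avoids that bookkeeping.

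There is no deep obstacle here: the entire content is in unwinding the definition of \emph{disjoint}. The only place requiring care is the backward direction of the bijection --- checking that rooks originating in different sub-boards are automatically non-attacking --- and this is precisely where the hypothesis that the $b_i$ share no rows or columns enters. It is also exactly why the naive product formula fails for a partition of $B$ into boards that are not disjoint, so the proof should make this dependence explicit.
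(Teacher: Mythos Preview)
Your proof is correct and is the standard bijective argument for this classical result. Note, however, that the paper does not actually supply its own proof of this theorem: it is stated as a cited result from Kaplansky and Riordan (1946) and used as a black box, so there is no in-paper argument to compare against.
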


We will use this disjoint board decomposition repeatedly, because the boards
that result
after placing a prefix can be partitioned into disjoint sub-boards whose
structure is well understood. Now we will give a name to these blocks,
which are illustrated in Figure \ref{fig:blockShape}.

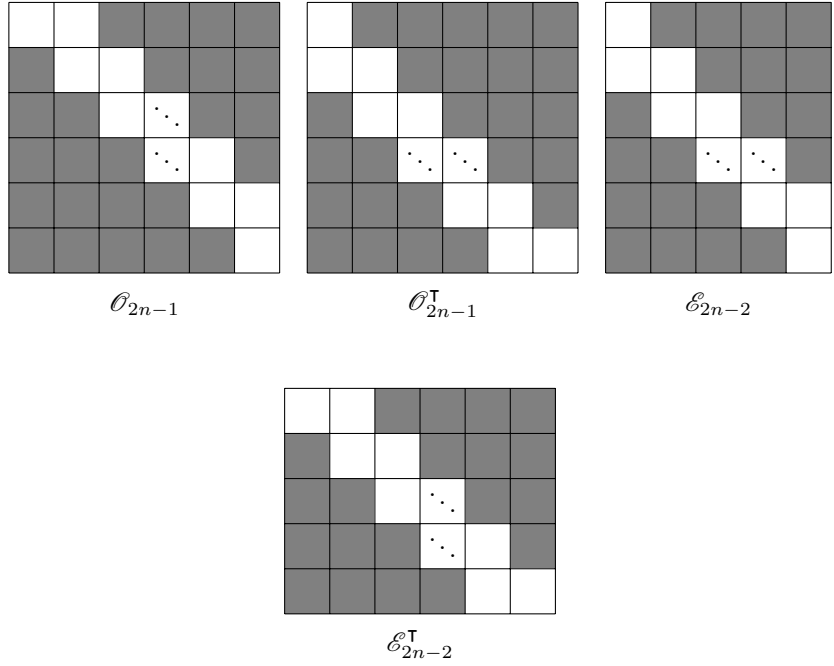
\begin{figure}[ht!]
  \center
  \begin{tikzpicture}[scale=0.6]
    \fill[gray] (1,0) rectangle (7,6);
    \foreach \i/\j in {1/5, 2/5, 2/4, 3/4, 3/3, 4/3, 4/2, 5/2, 5/1, 6/1, 6/0} { \fill[white] (\i, \j) rectangle (\i + 1, \j + 1); }
    \node at (4.5,3.65) {$\ddots$};
    \node at (4.5,2.65) {$\ddots$};
    \draw (1,0) grid (7,6);
    \path (1,-1.5) rectangle (7,6);
    \node at (4, -0.7) {$\mathcal{O}_{2n-1}$};
  \end{tikzpicture}
  ~
  \begin{tikzpicture}[scale=0.6]
    \fill[gray] (1,0) rectangle (7,6);
    \foreach \i/\j in {1/5, 1/4, 2/4, 2/3, 3/2, 3/3, 4/1, 4/2, 5/1, 5/0, 6/0} { \fill[white] (\i, \j) rectangle (\i + 1, \j + 1); }
    \node at (3.5,2.65) {$\ddots$};
    \node at (4.5,2.65) {$\ddots$};
    \draw (1,0) grid (7,6);
    \path (1,-1.5) rectangle (7,6);
    \node at (4, -0.7) {$\mathcal{O}_{2n-1}^\intercal$};
  \end{tikzpicture}
  ~
  \begin{tikzpicture}[scale=0.6]
    \fill[gray] (1,0) rectangle (6,6);
    \foreach \i/\j in {1/4, 1/5, 2/3, 2/4, 3/2, 3/3, 4/1, 4/2, 5/0, 5/1} { \fill[white] (\i, \j) rectangle (\i + 1, \j + 1); }
    \node at (3.5,2.65) {$\ddots$};
    \node at (4.5,2.65) {$\ddots$};
    \draw (1,0) grid (6,6);
    \path (1,-1.5) rectangle (6,6);
    \node at (3.5, -0.7) {$\mathcal{E}_{2n-2}$};
  \end{tikzpicture}
  ~
  \begin{tikzpicture}[scale=0.6]
    \fill[gray] (1,1) rectangle (7,6);
    \foreach \i/\j in {1/5, 2/5, 2/4, 3/4, 3/3, 4/3, 4/2, 5/2, 5/1, 6/1} { \fill[white] (\i, \j) rectangle (\i + 1, \j + 1); }
    \node at (4.5,3.65) {$\ddots$};
    \node at (4.5,2.65) {$\ddots$};
    \path (1,-0.5) rectangle (7,7);
    \draw (1,1) grid (7,6);
    \node at (4, 0.3) {$\mathcal{E}_{2n-2}^\intercal$};
  \end{tikzpicture}
  \caption[Block shapes for a m\'enage permutation.]{
    Examples of each of the four staircase-shaped boards.
    The first two boards are on grids of size $n \times n$,
    the third is on a grid of size $n \times (n-1)$ and the
    fourth is on a grid of size $(n-1) \times n$.
  }
  \label{fig:blockShape}
\end{figure}

\begin{definition}
  A board is called \textbf{staircase-shaped} if it matches one of the
  following four shapes:
  \begin{alignat*}{2}
    \mathcal{O}_{2n-1}           &= \{(i,i) \mid i \in [n]\}    &&\cup\ \{(i,i+1) \mid i \in [n-1]\} \\
    \mathcal{O}_{2n-1}^\intercal &= \{(i,i) \mid i \in [n]\}    &&\cup\ \{(i+1,i) \mid i \in [n-1]\} \\
    \mathcal{E}_{2n-2}           &= \{(i,i) \mid i \in [n-1]\}\ &&\cup\ \{(i+1,i) \mid i \in [n-1]\} \\
    \mathcal{E}_{2n-2}^\intercal &= \{(i,i) \mid i \in [n-1]\}\ &&\cup\ \{(i,i+1) \mid i \in [n-1]\}.
  \end{alignat*}
  The subscripts represent the number of squares, and the names represent their
  parity.
  \label{def:staircaseShaped}
\end{definition}

We now show that our resulting boards can be partitioned into boards of these shapes.

\begin{lemma}
  For $\ell \geq 1$, and prefix $\alpha = (\alpha_1, \alpha_2, \dots, \alpha_\ell)$
  the derived complementary board $B_\alpha^c$ can be partitioned into
  disjoint staircase-shaped boards.
  \label{lemma:boardShape}
\end{lemma}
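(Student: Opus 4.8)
The plan is to model the derived complementary board as the incidence graph of its forbidden squares and to observe that deleting at least one row turns the m\'enage ``cycle'' into a monotone path, which then falls apart into the four standard blocks. First I would recall, exactly as in the proof of Lemma~\ref{lemma:derangementComplementSize}, that $B_\alpha^c$ may be built in a different order: start from the full m\'enage complementary board
$B^c = \{(i,i) : i \in [n]\} \cup \{(i,i-1) : 2 \le i \le n\} \cup \{(1,n)\}$,
delete rows $1,\dots,\ell$ and columns $\alpha_1,\dots,\alpha_\ell$, and then reindex the surviving rows and columns order-preservingly into $[n-\ell]\times[n-\ell]$. It therefore suffices to understand what remains of $B^c$ after these deletions.

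Next I would encode $B^c$ as a bipartite graph $G$ with a vertex $r_i$ for each row and $c_j$ for each column, and one edge for each square of $B^c$; thus $r_i$ is adjacent to $c_i$ and to $c_{i-1}$, with indices read modulo $n$ so that $r_1$ is adjacent to $c_1$ and $c_n$. Every vertex of $G$ has degree $2$ and $G$ is connected, so $G$ is a single $2n$-cycle, which one may list as $r_1 - c_1 - r_2 - c_2 - \cdots - r_n - c_n - r_1$. The point of this encoding is that two sub-boards are disjoint (in the sense of the definition preceding Theorem~\ref{thm:productOfDisjointBoards}) exactly when they use disjoint sets of rows and columns, i.e. disjoint vertex sets of $G$; so partitioning $B_\alpha^c$ into disjoint staircase boards amounts to decomposing the corresponding subgraph of $G$ into vertex-disjoint paths whose edge sets are staircase-shaped.

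The key step is the unrolling. Since $\ell \ge 1$, the vertex $r_1$ is deleted; removing $r_1$ from the $2n$-cycle destroys the two ``wraparound'' edges at $r_1$ and leaves the path $P_0 = (c_1, r_2, c_2, r_3, \dots, r_n, c_n)$, along which the column indices $1,2,\dots,n$ and the row indices $2,3,\dots,n$ each appear in strictly increasing order. Deleting the remaining vertices $r_2,\dots,r_\ell$ and $c_{\alpha_1},\dots,c_{\alpha_\ell}$ (the $\alpha_i$ being distinct, as $\alpha$ is a permutation prefix) cuts $P_0$ into vertex-disjoint contiguous subpaths $P_1,\dots,P_m$; discarding any subpath with no edges, these yield pairwise disjoint sub-boards whose union is $B_\alpha^c$. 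Each $P_t$ is a contiguous alternating stretch of $P_0$, so its rows and its columns are still each visited in increasing order; after order-preservingly reindexing its own rows and columns, its edge set is a staircase board. A short case analysis on the parity of the number of edges of $P_t$ and on the types of its two endpoints identifies it with one of $\mathcal{O}_{2k-1}$, $\mathcal{O}_{2k-1}^\intercal$, $\mathcal{E}_{2k-2}$, $\mathcal{E}_{2k-2}^\intercal$: an odd number of edges forces one endpoint of each type and gives an $\mathcal{O}$-block, while an even number of edges forces both endpoints to be of the same type and gives an $\mathcal{E}$-block (two column-endpoints giving $\mathcal{E}^\intercal$, two row-endpoints giving $\mathcal{E}$), with the grid sizes matching Definition~\ref{def:staircaseShaped} automatically. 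Because the global reindexing of the first paragraph is itself order-preserving, it is compatible with these per-block reindexings, so $B_\alpha^c$ is indeed partitioned into disjoint staircase-shaped boards.

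I expect the only genuine work to be that last case analysis, matching the four shapes of Definition~\ref{def:staircaseShaped} — including their dimensions ($n\times n$ for the $\mathcal{O}$'s, $n\times(n-1)$ and $(n-1)\times n$ for the $\mathcal{E}$'s) — against the four kinds of contiguous alternating subpath; this is routine once the endpoint/parity bookkeeping is set up. The conceptual crux, which makes the rest mechanical, is the observation that deleting at least one row severs the wraparound edge and converts the m\'enage cycle into a path that is monotone in both coordinates, so no ``cyclic staircase'' can survive and every piece is one of the four standard blocks.
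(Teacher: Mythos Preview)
Your proof is correct and takes a genuinely different route from the paper's. The paper argues by induction on $\ell$: it checks the case $\ell=1$ directly (the single placement splits $B^c_{(\alpha_1)}$ into two $\mathcal{O}$-blocks) and then, for the inductive step, tabulates explicitly how each of the four staircase shapes transforms when the new rook placement deletes its top row, one of its columns, or both. Your argument is instead global and graph-theoretic: you encode $B^c$ as the $2n$-cycle on alternating row- and column-vertices, observe that deleting $r_1$ (guaranteed since $\ell \ge 1$) severs the wraparound and linearizes the cycle into the monotone path $c_1\,r_2\,c_2\cdots r_n\,c_n$, and then read off the staircase blocks as the connected subpaths that remain after the further vertex deletions, finishing with a single endpoint/parity case analysis.

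The paper's inductive approach has the advantage of being explicitly incremental: Table~\ref{table:rooksInBlocks} is effectively an update rule for the multiset of block sizes as the prefix is extended one letter at a time, which dovetails with the recursive unranking in Theorem~\ref{theorem:unrankFromPrefix}. Your approach, on the other hand, is conceptually cleaner and makes Lemma~\ref{lem:subBoardSizes} almost a tautology rather than a separate computation: the contiguous parts of $[n]\setminus\alpha$ are exactly the column-vertex sets of your subpaths, and counting edges along each subpath recovers the $c_p$-sums directly.
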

\begin{proof}
  The proof proceeds by induction on the length of the prefix.

  To establish the base case, consider a prefix of length $\ell = 1$.
  Because of the m\'enage restriction,
  $\alpha_1 \in \{2, 3, \dots, n-1\}$, and
  the derived complementary board $B_{(\alpha_1)}^c$
  can be partitioned into two disjoint sub-boards with shapes
  $\mathcal{O}_{2\alpha_1 - 3}$ and
  $\mathcal{O}^\intercal_{2n - 2\alpha_1 - 1}$.
  (This is illustrated for the case of $n = 7$ and $\alpha_1$ in
  Figure \ref{fig:menageSinglePlacement}.)

  The inductive hypothesis is that the derived complementary board for
  a prefix of length ${\ell - 1}$ consists of sub-boards with shape
  $\mathcal{O}_{2m - 1}$,
  $\mathcal{O}^\intercal_{2m - 1}$,
  $\mathcal{E}_{2m - 2}$, or
  $\mathcal{E}^\intercal_{2m - 2}$.
  Placing a rook in row $\ell$ can remove a top row or a column or both in a
  given sub-board.
  Table \ref{table:rooksInBlocks} below
  shows the resulting sub-boards after placing a rook in
  $\ell$-th row of $B$,
  which may be in the top row, the $i$-th column, or both.
  \\ ~ \\
  \captionsetup{type=table}
  \begin{tabular}{|l|l|l|}
  \hline
  Rook placement
    & $\mathcal{O}_{2m-1}$
    & $\mathcal{O}_{2m-1}^\intercal$
  \\ \hline
  Row $1$
    & $\mathcal{O}_{2m-3}$
    & $\mathcal{E}_{2m-2}^\intercal$
  \\
  Column $i$
    & $\mathcal{O}_{2i-3}$, $\mathcal{E}_{2m-2i}$
    & $\mathcal{E}_{2i-2}$, $\mathcal{O}_{2m-2i-1}^\intercal$
  \\
  Row $1$, column $i$
    & $\mathcal{O}_{2i-5}$, $\mathcal{E}_{2m-2i}$
    & $\mathcal{O}_{2i-3}$, $\mathcal{O}_{2m-2i-1}^\intercal$
  \\ \hline
  \end{tabular}
  \\ ~ \\
  \begin{tabular}{|l|l|l|}
    \hline
    Rook placement
      & $\mathcal{E}_{2m-2}$
      & $\mathcal{E}_{2m-2}$
    \\ \hline
    Row $1$
      & $\mathcal{O}_{2m-3}$
      & $\mathcal{E}_{2m-4}^\intercal$
    \\
    Column $i$
      & $\mathcal{E}_{2i-2}$, $\mathcal{E}_{2m-2i-2}$
      & $\mathcal{O}_{2i-3}$, $\mathcal{O}_{2m-2i-1}^\intercal$
    \\
    Row $1$, column $i$
      & $\mathcal{O}_{2i-3}$, $\mathcal{E}_{2m-2i-2}$
      & $\mathcal{O}_{2i-5}$, $\mathcal{O}_{2m-2i-1}^\intercal$
    \\ \hline
    \end{tabular}
  \captionof{table}[Rook placements on staircase-shaped boards]{
    The results of placing a rook in the first row, $i$-th column, or both
    for all staircase-shaped boards.
  }
  \label{table:rooksInBlocks}
  Therefore placing any number of rooks in the first $\ell$ rows results in a
  board whose complementary derived board is composed of disjoint
  staircase-shaped sub-boards.
\end{proof}

\begin{figure}[ht!]
  \center
  \begin{tikzpicture}[scale = 0.75]
    \foreach \i in {0,...,6} { \fill[gray] (\i, 6 - \i) rectangle (\i + 1, 7 - \i); }
    \foreach \i in {1,...,6} { \fill[gray] (\i-1, 6 - \i) rectangle (\i, 7 - \i); }
    \fill[gray] (6, 6) rectangle (7, 7);
    \draw (0,0) grid (7,7);
    \node (R) at (2.5, 6.5) {\huge\rook};
    \draw[ultra thick]
      (0.1,6.5) -- (R) -- (6.9,6.5)
      (R) -- (2.5,0.1)
    ;
  \end{tikzpicture}
  \begin{tikzpicture}[scale = 0.75]
    \path (0,-0.5) -- (0,6.5); 
    \foreach \i/\j in {1/5, 2/5, 2/4, 3/3, 3/2, 4/2, 4/1, 5/1, 5/0, 6/0} { \fill[gray] (\i, \j) rectangle (\i + 1, \j + 1); }
    \draw[ultra thick, dashed] (1,6) rectangle (3,4) rectangle (7,0);
    \draw (1,0) grid (7,6);
  \end{tikzpicture}
  \caption[The derived board for a single-letter prefix of a m\'enage permutation.]{
    The first chessboard shows a placement of a rook at position $3$,
    the second shows how the derived complementary board can be partitioned
    into two disjoint boards with $3$ and $7$ squares respectively.
  }
  \label{fig:menageSinglePlacement}
\end{figure}
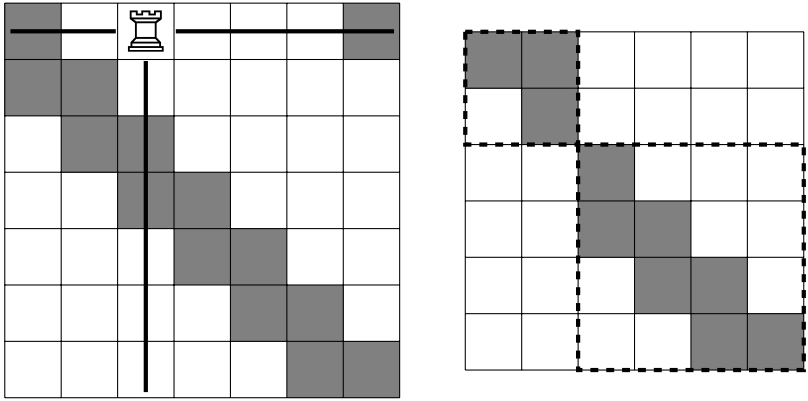

\subsection{Rook polynomials of blocks}
Recall that the goal of partitioning $B$ into disjoint sub-boards
$b_1, b_2, \dots, b_m$ is so that we can
factor $p_B(x)$ in terms of $p_{b_i}(x)$.
Of course, this is only useful if we can describe $p_{b_i}(x)$,
which is the goal of this subsection.
Conveniently, the rook polynomial of each $b_i$ will turn out to depend only on the
number of squares, $|b_i|$, which can be computed recursively because of its
staircase shape.

We will begin by defining a family of polynomials that, suggestively, will turn
out to be the rook polynomials that we are looking for.
(The coefficients of these polynomials are described by OEIS sequence A011973 \cite{oeis}.)
\begin{definition}
  For $j \geq 0$, the $j$-th \textbf{Fibonacci polynomial} $F_{j}(x)$ is defined recursively as:
  \begin{align}
    F_0(x) &= 1 \\
    F_1(x) &= 1 + x \\
    F_n(x) &= xF_{n-2}(x) + F_{n-1}(x).
  \end{align}
\label{def:FibonacciPolynomial}
\end{definition}

The rook polynomials of the staircase-shaped boards agree with these Fibonacci
polynomials.

\begin{lemma}
  If $B$ is a staircase-shaped board with $k$ squares, then
  $B$ has rook polynomial ${p_{B}(x) = F_{k}(x)}$, equal to the $k$-th
  Fibonacci polynomial.
  \label{lemma:staircaseIsFibonacci}
\end{lemma}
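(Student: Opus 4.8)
The plan is to prove this by induction on $k$, the number of squares in the staircase-shaped board, using the recursive structure of staircase boards under the rook polynomial recursion of Lemma~\ref{lemma:rookPolynomialRecursion}. First I would handle the base cases: a staircase board with $k = 0$ squares is the empty board, whose rook polynomial is $p_\emptyset(x) = 1 = F_0(x)$, and a staircase board with $k = 1$ square is a single square, whose rook polynomial is $1 + x = F_1(x)$. These match the initial values in Definition~\ref{def:FibonacciPolynomial}.

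For the inductive step, I would fix a staircase-shaped board $B$ with $k \geq 2$ squares and pick a convenient corner square $(x,y) \in B$ — specifically, one of the two ``endpoint'' squares of the staircase (say the square at the end of the strip that lies alone in its row or its column). Applying Lemma~\ref{lemma:rookPolynomialRecursion} at this square gives $p_B(x) = x\,p_{B_i}(x) + p_{B_e}(x)$. The key geometric observation is that for a staircase shape, deleting the corner square $(x,y)$ (to form $B_e$) leaves a staircase-shaped board with $k - 1$ squares, while deleting that square's row and column (to form $B_i$) leaves a staircase-shaped board with $k - 2$ squares. This is exactly the combinatorial content encoded in Table~\ref{table:rooksInBlocks}, read in the specialized case where the removed row or column is at the very end of the strip: removing one of the two boundary squares of $\mathcal{O}_{2m-1}$, $\mathcal{O}_{2m-1}^\intercal$, $\mathcal{E}_{2m-2}$, or $\mathcal{E}_{2m-2}^\intercal$ drops the square count by one or two while preserving the staircase type (up to transpose, which does not affect the rook polynomial). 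I would verify this by a short case check on the four shapes in Definition~\ref{def:staircaseShaped}, noting that each one, stripped of its terminal square, is again one of the four shapes with the index decremented appropriately, and stripped of the terminal square's row and column entirely, is again one of the four shapes with index decremented by two.

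Granting that geometric fact, the induction closes immediately: by the inductive hypothesis $p_{B_i}(x) = F_{k-2}(x)$ and $p_{B_e}(x) = F_{k-1}(x)$, so
\[
  p_B(x) = x\,F_{k-2}(x) + F_{k-1}(x) = F_k(x)
\]
by the recursion defining the Fibonacci polynomials. I would also remark that this confirms the rook polynomial depends only on $|B| = k$ and not on which of the four staircase shapes $B$ is, which is the statement we need in the sequel when we combine these via Theorem~\ref{thm:productOfDisjointBoards}.

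The main obstacle is not any deep argument but rather the bookkeeping of the case analysis in the inductive step: one must be careful about small boards (the transition from $k = 2$ or $k = 3$ down to the base cases, where a staircase of $2$ squares could be either an $\mathcal{O}_3$ missing a square or an $\mathcal{E}_2$, etc.) and about the fact that the four named shapes live on grids of slightly different dimensions ($n \times n$, $n \times n$, $n \times (n-1)$, $(n-1) \times n$), so ``deleting a row'' and ``deleting a column'' behave slightly asymmetrically. Choosing the endpoint square carefully — the one whose deletion cleanly peels off one rung of the staircase — keeps this manageable, and I would present the reduction compactly as a single table or a one-line description per shape rather than grinding through all subcases in prose.
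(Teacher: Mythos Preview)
Your proposal is correct and follows essentially the same approach as the paper: induct on the number of squares, apply the include/exclude recursion of Lemma~\ref{lemma:rookPolynomialRecursion} at an extremal square of the staircase, and identify $B_e$ and $B_i$ as staircase boards with $k-1$ and $k-2$ squares respectively. The paper streamlines the case analysis by observing at the outset that transposes share rook polynomials, so it only treats $\mathcal{O}_{2m-1}$ and $\mathcal{E}_{2m-2}$ and always peels off the upper-left square; your invocation of Table~\ref{table:rooksInBlocks} is a slight mismatch (that table concerns row/column deletions for derived boards rather than the include/exclude decomposition), but your direct case check on the four shapes is exactly what is needed.
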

\begin{proof}
  We will recall the recursive construction of rook polynomials from
  Lemma \ref{lemma:rookPolynomialRecursion}, and proceed by
  induction on the number of squares, always choosing to include or exclude
  the upper-left square.

  Since the reflections of board have the same rook polynomial as the
  unreflected board, without loss of generality, we
  will compute the rook polynomials for
  $\mathcal{O}_{2m-1}$ and $\mathcal{E}_{2m-2}$, respectively.

  To establish a base case, consider the rook polynomials when $n = 1$, so
  the even board has $|\mathcal{E}_{0}| = 0$ squares and
  the odd board has $|\mathcal{O}_{1}| = 1$ square.
  We can see the corresponding rook polynomials directly. There is $1$ way to
  place $0$ rooks on $\mathcal{E}_{0}$ and no ways to place more rooks;
  similarly there is
  $1$ way to place $0$ rooks on $\mathcal{O}_{1}$,
  $1$ way to place $1$ rook on $\mathcal{O}_{1}$, and
  no way to place more than one rook. Thus \begin{alignat}{2}
    p_{\mathcal{E}_{0}}(x) &= 1     &&= F_0(x), \text{ and} \\
    p_{\mathcal{O}_{1}}(x) &= 1 + x &&= F_1(x).
  \end{alignat}

  With the base case established, our inductive hypothesis is that
  $p_{B}(x) = F_{h}(x)$ whenever $B$ is a
  staircase-shaped board with $h < k$ squares.

  Assume that we have $k$ squares where $k$ is even, so our board looks like
  $\mathcal{E}_{k}$. We can either place a rook or not in the upper-left square.
  If we include the square, then $(\mathcal{E}_{k})_i \cong \mathcal{E}_{k-2}$,
  if we exclude the square, then $(\mathcal{E}_{k})_e \cong \mathcal{O}_{k-1}$.
  Thus by Lemma \ref{lemma:rookPolynomialRecursion}, the rook polynomial of
  $\mathcal{E}_{k}$ is
  \begin{align}
    p_{\mathcal{E}_{k}}(x)
    &= xp_{\mathcal{E}_{k-2}}(x) + p_{\mathcal{O}_{k-1}}(x) \\
    &= xF_{k-2}(x) + F_{k-1}(x) \\
    &= F_{k}(x).
  \end{align}

  The case where $k$ is odd proceeds in almost the same way.
  Here our board looks like $\mathcal{O}_{k}$.
  We can either place a rook or not in the upper-left square.
  If we include the square, then $(\mathcal{O}_{k})_i \cong \mathcal{O}_{k-2}$,
  if we exclude the square, then $(\mathcal{O}_{k})_e \cong \mathcal{E}_{k-1}$.
  Again by Lemma \ref{lemma:rookPolynomialRecursion}, the rook polynomial
  of $\mathcal{O}_{k}$ is
  \begin{align}
    p_{\mathcal{O}_{k}}(x)
    &= xp_{\mathcal{O}_{k-2}}(x) + p_{\mathcal{E}_{k-1}}(x) \\
    &= xF_{k-2}(x) + F_{k-1}(x) \\
    &= F_{k}(x).
  \end{align}
\end{proof}

Therefore, we now have the ingredients to describe the rook polynomial of
a derived complementary board.

\begin{corollary}
  Suppose that $B_\alpha^c$ can be partitioned into $m$ disjoint
  staircase-shaped sub-boards of sizes $b_1, b_2, \dots, b_m$.
  Then the rook polynomial of
  $B_\alpha^c$ is \begin{equation}
    p_{B_\alpha^c}(x) = \prod_{i=1}^m F_{b_i},
  \end{equation} where $F_j$ is the $j$-th Fibonacci polynomial.
  \label{cor:derivedComplementaryRookPolynomial}
\end{corollary}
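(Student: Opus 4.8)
The statement to prove is Corollary~\ref{cor:derivedComplementaryRookPolynomial}: if $B_\alpha^c$ partitions into disjoint staircase-shaped sub-boards of sizes $b_1,\dots,b_m$, then $p_{B_\alpha^c}(x) = \prod_{i=1}^m F_{b_i}(x)$.

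The plan is to combine two results already established in the excerpt, so the proof is essentially a two-line deduction. First I would invoke Lemma~\ref{lemma:boardShape}, which guarantees that $B_\alpha^c$ can indeed be partitioned into disjoint staircase-shaped boards, so the hypothesis of the corollary is always satisfiable; this also licenses speaking of ``the'' sub-boards $b_1,\dots,b_m$ with sizes summing appropriately. Second, since the sub-boards $b_1,\dots,b_m$ are pairwise disjoint in the sense of Definition preceding Theorem~\ref{thm:productOfDisjointBoards} (no square of one shares a row or column with a square of another), Theorem~\ref{thm:productOfDisjointBoards} gives
\begin{equation}
  p_{B_\alpha^c}(x) = \prod_{i=1}^m p_{b_i}(x).
\end{equation}
Finally, each $b_i$ is a staircase-shaped board with $|b_i| = b_i$ squares, so Lemma~\ref{lemma:staircaseIsFibonacci} identifies its rook polynomial as $p_{b_i}(x) = F_{b_i}(x)$. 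Substituting this into the product yields $p_{B_\alpha^c}(x) = \prod_{i=1}^m F_{b_i}(x)$, which is exactly the claimed formula.

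There is essentially no obstacle here: the corollary is a pure consequence of chaining Lemma~\ref{lemma:boardShape} (the decomposition exists), Theorem~\ref{thm:productOfDisjointBoards} (the rook polynomial factors over a disjoint decomposition), and Lemma~\ref{lemma:staircaseIsFibonacci} (each staircase factor is a Fibonacci polynomial). The only point requiring a word of care is making sure the notation is consistent --- the corollary uses $b_i$ both for the sub-board and for its size $|b_i|$, so I would note explicitly that $F_{b_i}$ means $F_{|b_i|}$, matching the indexing convention of Definition~\ref{def:FibonacciPolynomial} and Lemma~\ref{lemma:staircaseIsFibonacci}. One could also remark that the degrees match as a sanity check: $\deg F_{b_i}(x) = \lceil b_i/2\rceil$ for a staircase with $b_i$ squares, and these add up across the product, consistent with the maximum number of nonattacking rooks placeable on $B_\alpha^c$. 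No induction or new combinatorial argument is needed beyond what the cited results supply.
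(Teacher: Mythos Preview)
Your proof is correct and matches the paper's own argument, which simply cites Theorem~\ref{thm:productOfDisjointBoards} together with Lemma~\ref{lemma:staircaseIsFibonacci}. Your additional mention of Lemma~\ref{lemma:boardShape} is context rather than a logical dependency (the corollary's hypothesis already assumes the partition), and your remark clarifying that $b_i$ denotes the size $|b_i|$ is a welcome notational observation.
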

\begin{proof}
  This follows directly from Theorem \ref{thm:productOfDisjointBoards} together
  with Lemma \ref{lemma:staircaseIsFibonacci}.
\end{proof}

\subsection{Sub-boards from prefix}

In this part, we discuss how to algorithmically compute the size of the
sub-boards of the partition of the derived complementary board $B_\alpha^c$
for a given prefix $\alpha$.

\begin{lemma}
  Given a nonempty prefix $\alpha = (\alpha_1, \alpha_2, \dots, \alpha_\ell)$
  and $i \not\in \alpha$,
  the number of squares of $B^c$ in column $i$ that do not have
  a first coordinate in $[\ell]$
  is given by the rule:
  \begin{singlespace}
  \begin{equation}
    c_i = \begin{cases}
      0 & i < \ell \\
      1 & i = \ell \text{ or } i = n \\
      2 & \ell < i < n
    \end{cases}
  \end{equation}
  \end{singlespace}
\end{lemma}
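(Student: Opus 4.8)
The plan is to work directly with the complementary board $B^c$ before any reindexing, and to count, for a fixed column $i$ with $i \notin \{\alpha_1,\dots,\alpha_\ell\}$, exactly which squares $(r,i) \in B^c$ survive the deletion of rows $1,2,\dots,\ell$; equivalently, which squares $(r,i) \in B^c$ have $r > \ell$. By the m\'enage definition, a square $(r,i)$ lies in $B^c$ precisely when $r = i$ or $r \equiv i+1 \pmod n$. So for each column $i$ there are exactly two ``forbidden'' rows in the full $n\times n$ picture: the diagonal row $r = i$, and the super-/wraparound-diagonal row $r = i+1$ (with $r = 1$ when $i = n$). The quantity $c_i$ is then just the number of these two rows that exceed $\ell$, and the case analysis falls out from comparing $i$, $i+1$, and $n$ against $\ell$.

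First I would record the two candidate rows explicitly: call them $r_1 = i$ and $r_2$, where $r_2 = i+1$ if $i < n$ and $r_2 = 1$ if $i = n$. Next I would observe $r_1 \le r_2$ in every case except $i = n$, where $r_2 = 1 \le \ell$ always (since $\ell \ge 1$), so the $i = n$ column contributes exactly one surviving square, namely $(n,n)$ when $n > \ell$; this is why the rule gives $c_n = 1$ for $\ell < n$ (and the case $i = n \le \ell$ is vacuous, since then every such square has been deleted, matching ``$c_i = 0$ for $i < \ell$'' extended to $i = \ell$ giving $1$ only when that square survives — here I need to be slightly careful, see below). Then for $i < n$ I would split on the position of $i$ relative to $\ell$: if $i < \ell$ then both $r_1 = i < \ell$ and $r_2 = i+1 \le \ell$, so $c_i = 0$; if $i = \ell$ then $r_1 = \ell \not> \ell$ but $r_2 = \ell+1 > \ell$, so $c_i = 1$; and if $\ell < i < n$ then both $r_1 = i > \ell$ and $r_2 = i+1 > \ell$, so $c_i = 2$. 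Finally I would reconcile the $i = n$ analysis with the stated formula: since the hypothesis is $i \notin \alpha$ and $\alpha$ is a valid m\'enage prefix, the case $n = \ell$ with $n \notin \alpha$ cannot occur (a length-$n$ prefix is a full permutation and uses every column), so ``$i = n$'' only arises with $n > \ell$, giving $c_i = 1$ as claimed.

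The only mildly delicate point — and the step I expect to require the most care — is the boundary bookkeeping at $i = \ell$ and at $i = n$, where one of the two forbidden rows lands exactly on the cutoff or wraps around. In particular I must make sure the wraparound square $(1,n)$, which is in $B^c$, is always among the deleted rows (true since its row index is $1 \le \ell$), so that column $n$ never gets an extra unit from it; and I must confirm that the square $(\ell,\ell)$, which would be the diagonal square in column $\ell$, is deleted as part of rows $1,\dots,\ell$, so column $\ell$ contributes only the single square $(\ell+1,\ell)$. Once these two edge cases are pinned down, the remaining verification is the routine three-way comparison above, and the claim follows. (As a sanity check, one can sum $\sum_i c_i$ over the $n - \ell$ non-prefix columns against the count $|B_\alpha^c|$ obtainable from Lemma~\ref{lemma:derangementComplementSize}'s m\'enage analogue, though the present lemma does not require this.)
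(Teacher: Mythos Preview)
Your proposal is correct and follows essentially the same approach as the paper: both identify the two squares of $B^c$ in column $i$ (namely $(i,i)$ and $(i+1,i)$, with the wraparound $(1,n)$ replacing $(n+1,n)$ when $i=n$) and then do a direct case split on the position of $i$ relative to $\ell$ and $n$ to count how many of those row indices exceed $\ell$. The paper's argument is slightly terser and leaves implicit the observation that $i=n$ forces $n>\ell$, whereas you spell out that boundary check explicitly; otherwise the two arguments coincide.
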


\begin{proof}
  It is helpful to recall that the complementary board $B^c$ consists of squares
  on the diagonal, squares on the subdiagonal, and the square $(1, n)$:
  \begin{equation}
    B^c = \{(i, i) \mid i \in [n]\} \cup \{(i + 1, i) \mid i \in [n-1]\} \cup \{(1,n)\}.
  \end{equation}

  Now if $i < \ell$, then $(i,i)$ and $(i + 1, i)$ both have a first coordinate
  less than or equal to $\ell$.

  If $i = \ell$, then $(i, i)$ has a first coordinate in $[\ell]$, but
  $(i + 1, i) = (\ell + 1, \ell)$ does not have its first coordinate in $[\ell]$.

  If $i = n$, there are two squares of $B^c$ in column $i$: $(n, n)$ and $(1, n)$.
  Only $(1,n)$ has its first coordinate in $[\ell]$.

  If $\ell < i < n$, then neither the square $(i, i)$ nor $(i+1, i)$ has
  its first coordinate in $[\ell]$.
\end{proof}

Now we will go through each contiguous section of columns, and count the number
of squares in each to build up the size of each of the blocks.

\begin{lemma}
  Partition $[n] \setminus \alpha$ into contiguous parts, $\pazocal{P}$.
  Each part $P_i \in \pazocal{P}$ of the partition corresponds to a staircase-shaped
  sub-board of size $\sum_{p \in P_i} c_p$.

  Therefore the size of the disjoint sub-boards in the derived complementary
  board $B_\alpha^c$ is given by the multiset \begin{equation}
    \pazocal{P}_\alpha = \left\{ \sum_{p \in P_i} c_p \mid P_i \in \pazocal{P} \right\}.
  \end{equation}
\label{lem:subBoardSizes}
\end{lemma}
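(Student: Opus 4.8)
The plan is to decompose the derived complementary board $B_\alpha^c$ column-by-column, using the column-count function $c_i$ from the preceding lemma, and to show that the squares in a maximal contiguous run of columns of $[n]\setminus\alpha$ assemble into exactly one staircase-shaped board whose size is the sum of the relevant $c_i$'s. Since Lemma~\ref{lemma:boardShape} already guarantees that $B_\alpha^c$ is a disjoint union of staircase-shaped boards, what remains is to match up the \emph{combinatorial blocks} (maximal runs of surviving columns) with the \emph{geometric blocks} (the staircase pieces) and to compute their sizes. First I would recall that the surviving squares of $B^c$ all lie on the main diagonal, the subdiagonal, or at $(1,n)$, so a square $(i',i)$ that survives in column $i$ has its row $i'$ equal to $i$ or $i+1$ (or $i'=1$ in the corner case); in particular the row indices attached to column $i$ are "close to" $i$. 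This locality is the engine of the argument.

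Next I would argue that two surviving columns $i$ and $i'$ with $|i-i'|\geq 2$ contribute squares that share neither a row nor a column: they obviously do not share a column, and their row sets $\{i,i+1\}$ and $\{i',i'+1\}$ (post-reindexing, and treating the corner square separately) are disjoint when $|i-i'|\ge 2$. Hence each gap in $[n]\setminus\alpha$ (each deleted column $\alpha_j$) genuinely severs the board, so the squares coming from a maximal contiguous run $P_i=\{a,a+1,\dots,b\}$ of surviving columns form a union of disjoint pieces that is disconnected from the rest. Within such a run, consecutive columns $p$ and $p+1$ \emph{do} interact — the subdiagonal square in column $p$ sits in row $p+1$, which is the diagonal row of column $p+1$ — and stitching these together along the run produces precisely the zig-zag staircase pattern of Definition~\ref{def:staircaseShaped}. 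The total number of squares in that piece is $\sum_{p\in P_i}c_p$ by definition of $c_p$, and after reindexing into a contiguous block of rows and columns this is a staircase-shaped board on $\sum_{p\in P_i}c_p$ squares. Collecting these over all parts $P_i\in\pazocal P$ yields the multiset $\pazocal P_\alpha$.

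The one subtlety I would treat carefully is the wraparound square $(1,n)$: whether it merges the run containing column $n$ with the run containing column $1$ depends on whether rows $1$ and $n$ have already been deleted by the prefix (i.e.\ whether $1\in\alpha$ or the row corresponding to column $n$'s partner survives). I would handle this by checking the base case $\ell=1$ against Lemma~\ref{lemma:boardShape}'s base case (two pieces $\mathcal O_{2\alpha_1-3}$ and $\mathcal O^\intercal_{2n-2\alpha_1-1}$, sizes $2\alpha_1-3$ and $2n-2\alpha_1-1$) to confirm the counting matches, and then note that for $\ell\ge 1$ with $1\in\alpha$ (which, combined with the m\'enage restriction forcing structure near the corner, is the generic situation after the first step) the corner square has been removed and no wraparound occurs, so $[n]\setminus\alpha$ partitions into genuinely linear runs. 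I expect this corner bookkeeping — reconciling the circular m\'enage condition with the linear partition of $[n]\setminus\alpha$, and making sure the $c_i=1$ endpoint conventions at $i=\ell$ and $i=n$ produce exactly the right staircase parity — to be the main obstacle; the rest is a direct translation between the description of $B^c$ and the shapes in Definition~\ref{def:staircaseShaped}.
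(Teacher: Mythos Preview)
Your approach is essentially the same as the paper's: the paper also argues that nonadjacent surviving columns share no row (since column $i$ contributes only rows $i$ and $i+1$), and that within a contiguous run the subdiagonal square $(i+1,i)$ links to the diagonal square $(i+1,i+1)$ of the next column, producing a staircase.

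The one place you go astray is the corner square $(1,n)$. You frame its survival as depending on whether $1\in\alpha$ (a \emph{column} condition), but what actually governs it is whether \emph{row} $1$ has been deleted. Since the prefix is nonempty ($\ell\ge 1$), rows $1,\dots,\ell$ are always removed, so $(1,n)$ is gone unconditionally and there is never any wraparound to worry about. The paper dispatches this in its first sentence (``once the first row \dots\ has been deleted''), after which every surviving column $i$ has row set contained in $\{i,i+1\}$ and the argument is purely linear. Your proposed case analysis on $1\in\alpha$ and your appeal to the $\ell=1$ base case of Lemma~\ref{lemma:boardShape} are unnecessary once you make this observation.
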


\begin{proof}
  Once the first row of a complementary m\'enage board has been deleted,
  the resulting board has the property that any two nonadjacent columns
  do not have any squares in the same row, because column $i$ has
  squares in $(i, i)$ and $(i+1, i)$.

  Within each contiguous interval between the letters of $\alpha$,
  the columns form a staircase-shaped sub-board because
  each column with a square in position $(i + 1, i)$ has a square to its right,
  in position $(i + 1, i + 1)$ whenever $i + 1 \not\in \alpha$.
\end{proof}

\begin{example}
  As illustrated in Figure \ref{fig:menageMultiplePlacement}, if $n = 12$
  and $\alpha = (3,6,1,8)$, then the contiguous partition of \begin{equation}
    [12] \setminus \{3,6,1,8\} = \{
      \underbrace{2\vphantom{,}}_{P_1},
      \underbrace{4, 5}_{P_2},
      \underbrace{7\vphantom{,}}_{P_3},
      \underbrace{9, 10, 11, 12}_{P_4}
    \}
  \end{equation} is $\{P_1, P_2, P_3, P_4\}$. The corresponding staircase-shaped sub-boards
  have sizes
  \begin{alignat*}{3}
    k_1 &= c_2                            &&= 0             &&= 0 \\
    k_2 &= c_4 + c_5                      &&= 1 + 2         &&= 3 \\
    k_3 &= c_7                            &&= 2             &&= 2 \\
    k_4 &= c_9 + c_{10} + c_{11} + c_{12} &&= 2 + 2 + 2 + 1 &&= 7,
  \end{alignat*}
  which matches what we observe in the illustration:
  \begin{equation}
    B_\alpha^c = \mathcal{E}_0 \sqcup \mathcal{O}_3 \sqcup \mathcal{E}_2 \sqcup \mathcal{O}_7^\intercal,
  \end{equation}
  \label{ex:blocksFromPrefix}
\end{example}

\subsection{Complementary polynomials}
We have now established a method taking a prefix $\alpha$
and partitioning $B_\alpha^c$ into disjoint staircase-shaped sub-boards,
which allow us to determine the rook polynomial of $B_\alpha^c$.
Using Lemma \ref{lemma:CountsFromComplementaryPolynomials}, this allows us
to finally compute the number of ways of placing $n - \ell$ rooks on
$B_\alpha$, thus determining the number of derangements that begin with
$\alpha$.

\begin{theorem}
  The number of m\'enage permutations that begin with a valid, nonempty
  prefix $\alpha$ is
  \begin{equation}
    \operatorname{\#prefix}_\pazocal{M}(\alpha) = \sum_{k=0}^{n} (-1)^k r_k^c (n-k)!
  \end{equation} where
  $\sum_{k=0}^n r_k^c x^k = \prod_{p \in \pazocal{P}_\alpha} F_p$,
  $F_k$ is the $k$-th Fibonacci polynomial, and
  $\pazocal{P}_\alpha$ is the multiset corresponding to the size of the staircase-shaped
  sub-boards in the disjoint partition of $B_\alpha^c$.
\end{theorem}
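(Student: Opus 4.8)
The plan is to assemble the pieces built in the preceding lemmas; once they are in place the theorem is a short bookkeeping argument, and I would present it in that spirit.

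First I would reduce $\#\operatorname{prefix}_\pazocal{M}(\alpha)$ to a rook count on the derived board. Writing $\ell = |\alpha|$, a m\'enage permutation $\pi$ with prefix $\alpha$ is determined by the bijection $\{\ell+1,\dots,n\} \to [n] \setminus \{\alpha_1,\dots,\alpha_\ell\}$ it induces on the remaining positions, and this bijection must avoid the m\'enage-forbidden squares, namely the diagonal squares $(i,i)$, the subdiagonal squares $(i+1,i)$, and the corner square $(1,n)$. Deleting rows $1,\dots,\ell$ and columns $\alpha_1,\dots,\alpha_\ell$ from $[n]\times[n]$ and then discarding those forbidden squares is precisely the construction of $B_\alpha \subseteq [n-\ell]\times[n-\ell]$, and the reindexing is a bijection on rows and on columns, hence preserves both the non-attacking property and the count. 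So $\#\operatorname{prefix}_\pazocal{M}(\alpha)$ equals the number of placements of $n-\ell$ non-attacking rooks on $B_\alpha$. I would stress here that, since $\alpha$ is nonempty, the corner square $(1,n)$ lies in a deleted row, so $B_\alpha^c$ carries no cyclic irregularity; this is exactly what allows it to decompose into clean staircase blocks.

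Next I would run this count through the machinery already in hand: Lemma \ref{lemma:boardShape} partitions $B_\alpha^c$ into disjoint staircase-shaped sub-boards, Lemma \ref{lem:subBoardSizes} identifies the multiset of their sizes as $\pazocal{P}_\alpha$, and Corollary \ref{cor:derivedComplementaryRookPolynomial} (itself Theorem \ref{thm:productOfDisjointBoards} combined with Lemma \ref{lemma:staircaseIsFibonacci}) then gives $p_{B_\alpha^c}(x) = \prod_{p \in \pazocal{P}_\alpha} F_p(x) =: \sum_{k} r_k^c x^k$. Applying Lemma \ref{lemma:CountsFromComplementaryPolynomials} to the board $B_\alpha$ converts this rook polynomial of the complement, by inclusion--exclusion, into the number of full rook placements on $B_\alpha$, which by the previous paragraph is $\#\operatorname{prefix}_\pazocal{M}(\alpha)$. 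Since $p_{B_\alpha^c}$ is the rook polynomial of a board inside $[n-\ell]\times[n-\ell]$, its degree is at most $n-\ell$, so $r_k^c$ vanishes outside the relevant range and the summation bounds may be taken as in the statement. This yields the claimed formula.

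The only step with genuine content is the first one --- the identification of m\'enage completions of $\alpha$ with full rook placements on $B_\alpha$, together with the observation that a nonempty prefix removes the cyclic defect at $(1,n)$. Everything downstream is just the composition of results already proved. The one point requiring care is the dimension bookkeeping: the board actually being counted on is $B_\alpha$ of side $n-\ell$, not $n$, so Lemma \ref{lemma:CountsFromComplementaryPolynomials} must be invoked with that value and its factorial terms read accordingly.
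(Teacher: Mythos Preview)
Your approach is essentially the same as the paper's, which tersely cites Corollary \ref{cor:derivedComplementaryRookPolynomial} together with Lemma \ref{lem:subBoardSizes}; you simply spell out the intermediate steps (the reduction to a rook count on $B_\alpha$, the removal of the corner $(1,n)$ by any nonempty prefix, and the invocation of Lemma \ref{lemma:CountsFromComplementaryPolynomials}) that the paper leaves implicit. Your final caveat about dimension bookkeeping is well taken: the derived board has side $n-\ell$, and indeed the paper's own worked example computes with $(n-\ell-k)!$ rather than the $(n-k)!$ literally appearing in the displayed formula.
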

\begin{proof}
  This follows directly from Corollary \ref{cor:derivedComplementaryRookPolynomial}
  together with Lemma \ref{lem:subBoardSizes}
\end{proof}

Now that we have computed the number of m\'enage permutations,
Theorem \ref{theorem:unrankFromPrefix} provides an efficient
unranking algorithm for $\pazocal{M}$.

We will illustrate this with a specific example computing the number of m\'enage
permutations with a given prefix.
\begin{example}
  We will continue with the running example illustrated in
  Figure \ref{fig:menageMultiplePlacement} and expounded on
  in Example \ref{ex:blocksFromPrefix}.

  We've already seen that the for $n = 12$, the prefix $\alpha = (3,6,1,8)$
  partitions the derived complementary board into three nonempty sub-boards: \begin{equation}
    B_\alpha^c = \mathcal{E}_0 \sqcup \mathcal{O}_3 \sqcup \mathcal{E}_2 \sqcup \mathcal{O}_7^\intercal.
  \end{equation} Lemma \ref{lemma:staircaseIsFibonacci} tells us that the rook polynomial of
  $B_\alpha^c$ is \begin{align}
    p_{B_\alpha^c}(x)
    &= F_3(x)F_2(x)F_7(x) \\
    &= (1 + 3x + x^2)(1 + 2x)(1 + 7x + 15x^2 + 10x^3 + x^4) \\
    &= 1 + 12 x + 57 x^2 + 136 x^3 + 170 x^4 + 105 x^5 + 27 x^6 + 2 x^7 \\
    &= \sum_{k=0}^7 r_k^c x^k.
  \end{align}

  By Lemma \ref{lemma:CountsFromComplementaryPolynomials},
  the number of ways to place eight rooks on $B_\alpha$ is
  is \begin{align}
    N_0
      &= \sum_{k=0}^{7} (-1)^k r_k^c (8-k)! \\
      &= 1(8!) - 12(7!) + 57(6!) - 136(5!) + 170(4!) - 105(3!) + 27(2!) - 2(1!) \\
      &= 8062.
  \end{align}
  Therefore there are $8062$ m\'enage permutations in $S_{12}$ that start with
  the prefix $(3,6,1,8)$.
\end{example}

We can now repeatedly use the above counting technique in conjunction with
Theorem \ref{theorem:unrankFromPrefix} to unrank derangements.

\begin{example}
  There are $A000179(8) = 4738$ m\'enage permutations on $8$ letters.
  Table \ref{table:unrankMenage} shows the steps of the algorithm that
  determines that the $1000$th m\'enage permutation in lexicographic order is
  \begin{equation}
    w_{1000} = 3 \ 5 \ 4 \ 8 \ 2 \ 7 \ 1 \ 6.
  \end{equation}
\end{example}

\begin{table}
  \center
  \begin{tabular}{|l|r|l|c|l|}
    \hline
    $\alpha$ & $\#\operatorname{prefix}(\alpha)$ & index range & block sizes & $\operatorname{unrank}_\pazocal{M}(i)$\\ \hline
    $1       $ & $0$   & $(0,0]$            & $-$       & $f_{1000}^\pazocal{M}(1,0)$          \\
    $2       $ & $787$ & $(0,787]$          & $(1,11)$  & $f_{1000}^\pazocal{M}(2,0)$          \\
    $3       $ & $791$ & $(787, 1578]$      & $(3,9)$   & $f_{1000}^\pazocal{M}(3,787)$        \\ \hline
    $31      $ & $0$   & $(787, 787]$       & $-$       & $f_{1000}^\pazocal{M}(31,787)$       \\
    $32      $ & $0$   & $(787, 787]$       & $-$       & $f_{1000}^\pazocal{M}(32,787)$       \\
    $33      $ & $0$   & $(787, 787]$       & $-$       & $f_{1000}^\pazocal{M}(33,787)$       \\
    $34      $ & $159$ & $(787, 946]$       & $(1,7)$   & $f_{1000}^\pazocal{M}(34,787)$       \\
    $35      $ & $166$ & $(946, 1112]$      & $(1,2,5)$ & $f_{1000}^\pazocal{M}(35,946)$       \\ \hline
    $351     $ & $24$  & $(946, 970]$       & $(0,2,5)$ & $f_{1000}^\pazocal{M}(351,946)$      \\
    $\cdots$   & $0$   & $(970,970]$        & $-$       & \\
    $354     $ & $34$  & $(970, 1004]$      & $(0,5)$   & $f_{1000}^\pazocal{M}(354,970)$      \\ \hline
    $3541    $ & $5$   & $(970,975]$        & $(0,5)$   & $f_{1000}^\pazocal{M}(3541,970)$     \\
    $3542    $ & $5$   & $(975,980]$        & $(0,5)$   & $f_{1000}^\pazocal{M}(3542,975)$     \\
    $\cdots$   & $0$   & $(980,980]$        & $-$       & \\
    $3546    $ & $8$   & $(980,988]$        & $(0,3)$   & $f_{1000}^\pazocal{M}(3546,980)$     \\
    $3547    $ & $10$  & $(988,998]$        & $(0,2,1)$ & $f_{1000}^\pazocal{M}(3547,988)$     \\
    $3548    $ & $6$   & $(998,1004]$       & $(0,4)$   & $f_{1000}^\pazocal{M}(3548,998)$     \\ \hline
    $35481   $ & $1$   & $(998,999]$        & $(0,4)$   & $f_{1000}^\pazocal{M}(35481,998)$    \\
    $35482   $ & $1$   & $(999,1000]$       & $(0,4)$   & $f_{1000}^\pazocal{M}(35482,999)$    \\ \hline
    $354821  $ & $0$   & $(999,999]$        & $(3)$     & $f_{1000}^\pazocal{M}(354821,999)$   \\
    $\cdots$   & $0$   & $(999,999]$        & $-$       & \\
    $354827  $ & $1$   & $(999,1000]$       & $(0,1)$   & $f_{1000}^\pazocal{M}(354827,999)$   \\ \hline
    $3548271 $ & $1$   & $(999,1000]$       & $(0)$     & $f_{1000}^\pazocal{M}(3548271,999)$  \\ \hline
    $35482716$ & $1$   & $(999,1000]$       & $()$      & $f_{1000}^\pazocal{M}(35482716,999)$ \\ \hline
  \end{tabular}
  \caption[Steps for computing the $1000$th m\'enage permutation in $S_8$.]{
    The recursive computation of the 1000th m\'enage permutation.
  }
\label{table:unrankMenage}
\end{table}

\section{Generalizations and open questions}

In this final section we explore several possible future directions for
applying these ideas in new contexts. We can potentially apply these unranking
techniques to position-restricted permutations, permutations that satisfy
certain inequalities with respect to a permutation statistic, or words that
avoid or match certain patterns.

%
\subsection{Other restricted permutations}
In a 2014 paper about finding linear recurrences for derangements, m\'enage
permutations and other restricted permutations, Doron Zeilberger
introduces a more general family of restricted permutations.
\begin{definition}[\cite{Zeilberger2014}]
  Let $S \subset \mathbb Z$ be a finite collection of integers.
  An $S$-\textbf{avoiding permutation} is a permutation $\pi \in S_n$ such that
  \begin{equation}
    \pi(i) - i - s \not\equiv 0 \pmod n
    \hspace{0.25cm}\text{for all}\hspace{0.25cm}
    i \in [n]
    \hspace{0.25cm}\text{and}\hspace{0.25cm}
    s \in S.
  \end{equation}
\end{definition}

\begin{example}
  In terms of $S$-avoiding permutations, \begin{itemize}
    \item ordinary permutations are $\emptyset$-avoiding permutations,
    \item derangements are $\{0\}$-avoiding permutations, and
    \item m\'enage permutations are $\{-1,0\}$-avoiding permutations.
  \end{itemize}
\end{example}

The results in the previous sections straightforwardly adapt to the cases of
unranking $\{i\}$-avoiding and $\{i, i+1\}$ avoiding permutations.

\begin{openquestion}
  For arbitrary finite subsets $S \subset \mathbb Z$,
  do there exist efficient unranking algorithms on $S$-avoiding permutations?
\end{openquestion}

The techniques used to unrank derangements and m\'enage permutations do
not appear to generalize even to superficially similar domains. So, in the
spirit of Richard Arratia's bounties, it is only fair to offer one of my own.

\begin{problem}[\$100 question]
  Do there exist efficient unranking algorithms on $\{-1, 1\}$-avoiding
  permutations?
\end{problem}

The main obstruction to using the techniques from
Section \ref{sec:unrankingMenage} to resolve this question is that placing
a rook and deleting a column does not necessarily cause the left and right
sides of that column to be disjoint. As such, unranking $\{-1,1\}$-avoiding
permutations appears to require a genuinely novel insight.

\subsection{Permutation statistics}
Another area for exploration is unranking permutations with a given permutation
statistic.

\begin{openquestion}
  Let $\operatorname{inv}\colon S_n \rightarrow \mathbb N_{\geq 0}$ be the map
  that counts inversions of a permutation. Since $\operatorname{inv}$ is a
  Mahonian statistic, the generating function for the number of permutations
  $\pi \in S_n$ such that $\operatorname{inv}(\pi) = k$ is given by the
  $q$ analog of $n!$, $n!_q$.

  Does there exist an efficient unranking function on the set \begin{equation}
    \mathcal{I}_n^k = \{\pi \in S_n | \operatorname{inv}(\pi) = k\},
  \end{equation} and if so, how does one construct it?
\end{openquestion}

We can, of course, substitute $\operatorname{inv}$ with any other permutation
statistic of interest.

\subsection{Pattern avoidance}
In the field of combinatorics on words, there exists a notion of patterns and
instances of a pattern. At this level of informality, this is probably best
illustrated with an example (with undefined words in bold).

\begin{example}
  The word $1100010110$ is an \textbf{instance} of the \textbf{pattern} $ABA$ with $A = 110$ and
  $B = 0010$.
  The word $32123213212$ is said to \textbf{match} the pattern $CC$ with $C = 321$ because
  it contains a substring of the form $321321$.
\end{example}

\begin{openquestion}
  Given a pattern $P$, is it possible to unrank words of length $n$ over an
  alphabet $\mathcal A$ that are not instances of the pattern $P$?
  That match the pattern $P$? That don't match the pattern $P$?
\end{openquestion}

\subsection{Prefixes of Lyndon words}
There are other collections of finite words that might be amenable to some of
the above techniques. In particular, Kociumaka, Radoszewski, and Rytter
\cite{Kociumaka2014} give polynomial time algorithms for unranking Lyndon
words. We have some conjectures about prefixes of Lyndon words and open
questions about other restricted words.
\begin{definition}
  A \textbf{Lyndon word} is a string over an alphabet of letters
  that is the unique minimum with respect to all of its rotations.
\end{definition}
\begin{example}
  $00101$ is a Lyndon word because \begin{equation}
    00101 = \min\{00101, 01010, 10100, 01001, 10010\}
  \end{equation} is the unique minimum of all of its rotations.

  $011011$ is not a Lyndon word because while \begin{equation}
    011011 = \min\{011011, 110110, 101101, 011011, 110110, 101101\},
  \end{equation}
  it is not the \textbf{unique} minimum.
  (That is, rotating it three positions returns it to itself.)
\end{example}

\begin{definition}[\cite{Sloane1995}]
  Suppose that $\{a_i\}_{i=1}^\infty$ and $\{b_i\}_{i=1}^\infty$ are integer sequences
  related by \begin{equation}
    1 + \sum_{n=1}^\infty b_n x^n = \prod_{i=1}^\infty \frac{1}{(1-x^i)^{a_i}}.
  \end{equation} Then $\{b_i\}_{i=1}^\infty$ is said to be the
  \textbf{Euler transform} of $\{a_i\}_{i=1}^\infty$, denoted
  $\pazocal{E}(\{a_i\}_{i=1}^\infty) = \{b_i\}_{i=1}^\infty$.
\end{definition}

\begin{definition}
  Let $\mathcal{L}_\alpha = \{\ell_n^\alpha\}_{n=1}^\infty$ where
  $\ell_n^\alpha$ is the number of Lyndon words with prefix $\alpha$ and length $n$
  over the alphabet $\{0,1\}$.
\end{definition}

\begin{conjecture}
  The Euler transform of the number of Lyndon words with prefix $\alpha$ and
  length $n$ over the alphabet $\{0,1\}$,
  $\pazocal{E}(\mathcal{L_\alpha}) = \{t_{n}^\alpha\}_{n=1}^\infty$,
  follows a linear recurrence for all $n \geq N_\alpha$.
\end{conjecture}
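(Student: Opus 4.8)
The plan is to prove the following stronger assertion, which immediately implies the conjecture and even pins down an explicit $N_\alpha$: the generating function $T_\alpha(x):=1+\sum_{n\ge 1}t_n^\alpha x^n$ is a rational function of $x$ (a sequence satisfies a constant‑coefficient linear recurrence from some point on precisely when its ordinary generating function is rational). By the definition of the Euler transform, $T_\alpha(x)=\prod_{d\ge 1}(1-x^d)^{-\ell_d^\alpha}$, so taking logarithms gives $\log T_\alpha(x)=\sum_{n\ge 1}\frac{s_n^\alpha}{n}x^n$ with $s_n^\alpha:=\sum_{d\mid n}d\,\ell_d^\alpha$. Because $T_\alpha(0)=1$, rationality of $T_\alpha$ is equivalent to $\log T_\alpha$ being an integer linear combination of terms $-\log(1-\beta x)$, i.e.\ to $s_n^\alpha$ being a \emph{simple} power sum $\sum_j c_j\beta_j^{\,n}$ (constant coefficients $c_j$, no polynomial‑in‑$n$ factors) — and, crucially, this representation must be valid for \emph{every} $n\ge 1$, not merely eventually, since any rational $T_\alpha$ with $T_\alpha(0)=1$ factors as $\prod_j(1-\beta_j x)^{-c_j}$. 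So the task reduces to exhibiting $s_n^\alpha$ as such a power sum.

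Next I would interpret $s_n^\alpha$ combinatorially. Since $d\,\ell_d^\alpha$ is the number of primitive binary words of length $d$ whose Lyndon representative (least rotation) begins with $\alpha$, and every length‑$n$ word is a unique power of its primitive root, $s_n^\alpha$ equals the number of $w\in\{0,1\}^n$ such that the Lyndon representative of the primitive root of $w$ has prefix $\alpha$. Writing $m=|\alpha|$ and calling a length‑$m$ factor of $\cdots w w w\cdots$ starting at one of the $n$ distinguished positions a \emph{cyclic factor} of $w$, this property is equivalent to the conjunction of: the period of $w$ is at least $m$; every cyclic factor of $w$ of length $m$ is $\ge_{\mathrm{lex}}\alpha$; and some cyclic factor of $w$ of length $m$ equals $\alpha$. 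Dropping the period condition enlarges the count only by words of period $d<m$, and a short analysis shows such words exist exactly when $\alpha$ is a proper power $\gamma^k$ ($k\ge 2$) of a Lyndon word $\gamma$ with $|\gamma|\mid n$; their number is the periodic function $\rho_\alpha(n)=|\gamma|\cdot[\,\alpha\text{ is a proper Lyndon power}\,]\cdot[\,|\gamma|\mid n\,]$. Hence $s_n^\alpha=U_n-\rho_\alpha(n)$ for all $n\ge 1$, where $U_n$ counts the words satisfying the two cyclic‑factor conditions.

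The conditions defining $U_n$ are recognized by a transfer matrix. On the de Bruijn graph whose vertices are the binary strings of length $m-1$, with the edge from $u$ to $v$ (when $v$ is $u$ with its first letter dropped and a new letter appended) labelled by the length‑$m$ word consisting of $u$ followed by that new letter, let $M_1$ be the adjacency matrix obtained by deleting every edge whose label is $<_{\mathrm{lex}}\alpha$, and $M_2$ the one obtained by deleting every edge whose label is $\le_{\mathrm{lex}}\alpha$. The standard bijection between closed walks of length $n$ in such a graph and words of length $n$ (read the first letter of each successive edge label) is a genuine bijection for every $n\ge 1$, short‑period words corresponding to short‑period closed walks; it shows $\operatorname{tr}(M_1^n)$ counts words of length $n$ all of whose cyclic $m$‑factors are $\ge\alpha$ and $\operatorname{tr}(M_2^n)$ those with all cyclic $m$‑factors $>\alpha$, so $U_n=\operatorname{tr}(M_1^n)-\operatorname{tr}(M_2^n)$ and therefore
\[
  s_n^\alpha=\operatorname{tr}(M_1^n)-\operatorname{tr}(M_2^n)-\rho_\alpha(n)\qquad\text{for all }n\ge 1 .
\]

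To conclude, for any square matrix $M$ one has $\operatorname{tr}(M^n)=\sum_\lambda(\text{algebraic multiplicity of }\lambda)\,\lambda^n$, a simple power sum (a $k\times k$ Jordan block with eigenvalue $\lambda$ contributes $k\lambda^n$, with no polynomial‑in‑$n$ factor; equivalently $\sum_{n\ge 1}\operatorname{tr}(M^n)x^n=-x\frac{d}{dx}\log\det(I-xM)$), and $\rho_\alpha(n)$, being periodic, is an integer linear combination of $\zeta^n$ over roots of unity $\zeta$. Hence $s_n^\alpha$ is a simple power sum for every $n\ge 1$, so
\[
  T_\alpha(x)=\frac{\det(I-xM_2)}{\det(I-xM_1)}\cdot(1-x^{|\gamma|})^{[\,\alpha\text{ a proper Lyndon power}\,]}
\]
is rational; this proves the conjecture, and $N_\alpha$ may be taken to be the degree of the numerator of the right‑hand side. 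I expect the genuinely delicate point to be establishing the combinatorial equivalence of the second paragraph exactly — in particular the interaction of the ``period $\ge m$'' clause with the cyclic‑factor conditions, and the corresponding value of $\rho_\alpha(n)$ for the small lengths $n<m$ and for non‑primitive $\alpha$: eventual agreement of $s_n^\alpha$ with a power sum is routine, but, as the first paragraph shows, rationality of $T_\alpha$ genuinely requires the displayed identity to hold for \emph{every} $n\ge 1$, so the period‑$<m$ bookkeeping packaged into $\rho_\alpha$ must be carried out precisely.
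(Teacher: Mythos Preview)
The paper does not prove this statement --- it is presented as a conjecture supported only by the data in Table~\ref{table:lyndonConjectures}. Your argument therefore goes well beyond the paper, and the overall strategy (interpret $s_n^\alpha=\sum_{d\mid n}d\,\ell_d^\alpha$ combinatorially, capture it by a transfer matrix on the de~Bruijn graph, and exponentiate) is sound and would indeed establish rationality of $T_\alpha$.

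There is, however, a concrete error in the ``short analysis'' of the correction term $\rho_\alpha$. You claim that words of length $n$ with period $<m$ and minimal cyclic $m$-factor equal to $\alpha$ exist only when $\alpha=\gamma^k$ is a \emph{proper power} of a Lyndon word. This is false: it suffices that $\alpha$ be a prefix of $\gamma^\infty$ for some Lyndon $\gamma$ with $|\gamma|<m$. Take $\alpha=010$ (so $m=3$): this is primitive, yet for every even $n$ the two words $(01)^{n/2}$ and $(10)^{n/2}$ have period $2<3$ and minimal cyclic $3$-factor $010$. Thus $\rho_{010}(n)=2\cdot[\,2\mid n\,]\neq 0$, and your displayed formula for $T_\alpha$ gives $\frac{\det(I-xM_2)}{\det(I-xM_1)}=\frac{1-x-x^3}{1-x-x^2}=1+x^2+x^4+x^5+2x^6+\cdots$, which disagrees with the paper's table already at the $x^2$ coefficient.

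The fix is straightforward and preserves your argument. A word of period $d<m$ has minimal cyclic $m$-factor $\alpha$ iff its primitive root has Lyndon representative $L$ with $\alpha$ equal to the $m$-prefix of $L^\infty$; this forces $L=\alpha[1..d]$, so the condition on $d$ is that $d$ is a period of $\alpha$ and $\alpha[1..d]$ is Lyndon. Since Lyndon words are unbordered, at most one such $d$ exists (necessarily the smallest period $d^*$ of $\alpha$), and then
\[
  \rho_\alpha(n)=d^*\cdot[\,d^*\mid n\,],\qquad
  T_\alpha(x)=\frac{\det(I-xM_2)}{\det(I-xM_1)}\cdot(1-x^{d^*})^{[\,d^*\text{ exists}\,]}.
\]
For $\alpha=010$ this gives $(1-x^2)\frac{1-x-x^3}{1-x-x^2}=1+x^5+x^6+2x^7+3x^8+\cdots$, matching Table~\ref{table:lyndonConjectures} exactly. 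With this correction (and the verification, which does go through, that the de~Bruijn trace identity and $\rho_\alpha$ formula hold for \emph{all} $n\ge 1$, including $n<m$), your proof is complete and settles the conjecture.
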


This conjecture and the following conjectures are based on the data
in Table \ref{table:lyndonConjectures}.

\begin{table}
  \center
  \begin{tabular}{|c|r@{\hspace{0.24cm}}r@{\hspace{0.24cm}}r@{\hspace{0.24cm}}r@{\hspace{0.24cm}}r@{\hspace{0.24cm}}r@{\hspace{0.24cm}}r@{\hspace{0.24cm}}r@{\hspace{0.24cm}}r@{\hspace{0.24cm}}r@{\hspace{0.24cm}}r@{\hspace{0.24cm}}r|@{\,}l@{\,}|@{\,}l@{\,}|}
    \multicolumn{1}{c}{$\alpha$}
      & \multicolumn{12}{c}{$\mathcal{L}_\alpha$ and $\pazocal{E}(\mathcal{L}_\alpha)$}
      & \multicolumn{1}{c}{Conjectured recurrence}
      & \multicolumn{1}{c}{}
    \\ \hline

    \multirow{2}{*}{$0$}
      & 1 & 1 & 2 & 3 & 6 & 9 & 18 & 30 & 56 & 99 & 186 & 335
      & \multirow{2}{*}{$a_{n+1} = 2a_{n}$}
      & \multirow{2}{*}{$n \geq 2$} \\
      & 1 & 1 & 2 & 4 & 8 & 16 & 32 & 64 & 128 & 256 & 512 & 1024
      & &

    \\ \hline

    \multirow{2}{*}{$00$}
      & 0 & 0 & 1 & 2 & 4 & 7 & 14 & 25 & 48 & 88 & 168 & 310
      & \multirow{2}{*}{$a_{n+1} = 2a_{n}$}
      & \multirow{2}{*}{$n \geq 4$} \\
      & 1 & 0 & 0 & 1 & 2 & 4 & 8 & 16 & 32 & 64 & 128 & 256
      & &

    \\ \hline

    \multirow{2}{*}{$01$}
      & 0 & 1 & 1 & 1 & 2 & 2 & 4 & 5 & 8 & 11 & 18 & 25
      & \multirow{2}{*}{$a_{n+2} = a_{n+1} + a_{n}$}
      & \multirow{2}{*}{$n \geq 1$} \\
      & 1 & 0 & 1 & 1 & 2 & 3 & 5 & 8 & 13 & 21 & 34 & 55
      & &

    \\ \hline

    \multirow{2}{*}{$000$}
      & 0 & 0 & 0 & 1 & 2 & 4 & 8 & 15 & 30 & 57 & 112 & 214
      & \multirow{2}{*}{$a_{n+1} = 2a_{n}$}
      & \multirow{2}{*}{$n \geq 5$} \\
      & 1 & 0 & 0 & 0 & 1 & 2 & 4 & 8 & 16 & 32 & 64 & 128
      & &

      \\ \hline

    \multirow{2}{*}{$001$}
      & 0 & 0 & 1 & 1 & 2 & 3 & 6 & 10 & 18 & 31 & 56 & 96
      & \multirow{2}{*}{$a_{n+3} = a_{n+2} + a_{n+1} + a_{n}$}
      & \multirow{2}{*}{$n \geq 1$} \\
      & 1 & 0 & 0 & 1 & 1 & 2 & 4 & 7 & 13 & 24 & 44 & 81
      & &

    \\ \hline

    \multirow{2}{*}{$010$}
      & 0 & 0 & 0 & 0 & 1 & 1 & 2 & 3 & 5 & 7 & 12 & 18
      & \multirow{2}{*}{$a_{n+2} = a_{n+1} + a_{n}$}
      & \multirow{2}{*}{$n \geq 5$} \\
      & 1 & 0 & 0 & 0 & 0 & 1 & 1 & 2 & 3 & 5 & 8 & 13
      & &

    \\ \hline

    \multirow{2}{*}{$011$}
      & 0 & 0 & 1 & 1 & 1 & 1 & 2 & 2 & 3 & 4 & 6 & 7
      & \multirow{2}{*}{$a_{n+3} = a_{n+2} + a_{n}$}
      & \multirow{2}{*}{$n \geq 1$} \\
      & 1 & 0 & 0 & 1 & 1 & 1 & 2 & 3 & 4 & 6 & 9 & 13
      & &

    \\ \hline

    \multirow{2}{*}{$0000$}
      & 0 & 0 & 0 & 0 & 1 & 2 & 4 & 8 & 16 & 31 & 62 & 121
      & \multirow{2}{*}{$a_{n+1} = 2a_{n}$}
      & \multirow{2}{*}{$n \geq 6$} \\
      & 1 & 0 & 0 & 0 & 0 & 1 & 2 & 4 & 8 & 16 & 32 & 64
      & &

    \\ \hline

    \multirow{2}{*}{$0001$}
      & 0 & 0 & 0 & 1 & 1 & 2 & 4 & 7 & 14 & 26 & 50 & 93
      & \multirow{2}{*}{$a_{n+4} = a_{n+3} + a_{n+2} + a_{n+1} + a_{n}$}
      & \multirow{2}{*}{$n \geq 1$} \\
      & 1 & 0 & 0 & 0 & 1 & 1 & 2 & 4 & 8 & 15 & 29 & 56
      & &

    \\ \hline

    \multirow{2}{*}{$0010$}
      & 0 & 0 & 0 & 0 & 1 & 1 & 3 & 5 & 9 & 16 & 30 & 53
      & \multirow{2}{*}{$a_{n+3} = a_{n+2} + a_{n+1} + a_{n}$}
      & \multirow{2}{*}{$n \geq 6$} \\
      & 1 & 0 & 0 & 0 & 0 & 1 & 1 & 3 & 5 & 9 & 17 & 31
      & &

    \\ \hline

    \multirow{2}{*}{$0011$}
    & 0 & 0 & 0 & 1 & 1 & 2 & 3 & 5 & 9 & 15 & 26 & 43
    & \multirow{2}{*}{$a_{n+4} = a_{n+3} + a_{n + 2} + a_{n}$}
    & \multirow{2}{*}{$n \geq 1$} \\
    & 1 & 0 & 0 & 0 & 1 & 1 & 2 & 3 & 6 & 10 & 18 & 31
    & &

    \\ \hline

    \multirow{2}{*}{$0101$}
    & 0 & 0 & 0 & 0 & 1 & 1 & 2 & 3 & 5 & 7 & 12 & 18
    & \multirow{2}{*}{$a_{n+2} = a_{n+1} + a_{n}$}
    & \multirow{2}{*}{$n \geq 5$} \\
    & 1 & 0 & 0 & 0 & 0 & 1 & 1 & 2 & 3 & 5 & 8 & 13
    & &

    \\ \hline

    \multirow{2}{*}{$0110$}
    & 0 & 0 & 0 & 0 & 0 & 0 & 1 & 1 & 1 & 2 & 3 & 4
    & \multirow{2}{*}{$a_{n+3} = a_{n+2} + a_{n}$}
    & \multirow{2}{*}{$n \geq 6$} \\
    & 1 & 0 & 0 & 0 & 0 & 0 & 0 & 1 & 1 & 1 & 2 & 3
    & &

    \\ \hline

    \multirow{2}{*}{$0111$}
    & 0 & 0 & 0 & 1 & 1 & 1 & 1 & 1 & 2 & 2 & 3 & 3
    & \multirow{2}{*}{$a_{n+4} = a_{n+3} + a_{n}$}
    & \multirow{2}{*}{$n \geq 1$} \\
    & 1 & 0 & 0 & 0 & 1 & 1 & 1 & 1 & 2 & 3 & 4 & 5
    & &

    \\ \hline
  \end{tabular}
  \caption[Conjectures about the number of Lyndon words with a given prefix]{
    A table of conjectures about the number of Lyndon words of length $n$ and prefix $\alpha$.
    Each row contains
    a prefix $\alpha$,
    a sequence counting Lyndon words with $\alpha$ ($\mathcal{L}_\alpha$),
    the Euler transformation of that sequence ($\pazocal{E}(\mathcal{L}_\alpha)$),
    a conjectured recurrence for the Euler-transformed sequence, and
    the valid range for the conjectured recurrence.
  }
  \label{table:lyndonConjectures}
\end{table}

We start with two specific conjectures about two families of prefixes.
\begin{conjecture}
  For $k \geq 1$,
  let $\mathcal{L}_\alpha$ be the sequence of the
  number of Lyndon words of length $n$ with prefix
  ${\alpha = (0, 0, \dots, 0)}$ of length $k$ over the alphabet $\{0,1\}$.
  Then the Euler transform
  $\pazocal{E}(\mathcal{L}_\alpha) = \{t_{n}^\alpha\}_{n=1}^\infty$
  follows the linear recurrence $t_{n+1}^\alpha = 2t_{n}^\alpha$ for all
  $n \geq k + 2$.
\end{conjecture}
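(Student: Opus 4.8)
The plan is to recast the claim as a statement about the generating function
\[
  B_k(x) := 1 + \sum_{n\ge 1} t_n^\alpha x^n = \prod_{i\ge 1}(1-x^i)^{-\ell_i^\alpha}
\]
for $\alpha = (0,0,\dots,0)$ of length $k$: the recurrence $t_{n+1}^\alpha = 2t_n^\alpha$ for all $n \ge k+2$ is equivalent to $(1-2x)B_k(x)$ being a polynomial of degree at most $k+2$. The first move is to strip off the Euler transform using the Chen--Fox--Lyndon theorem. Since every binary word factors uniquely as a non-increasing product of Lyndon words, the count $L(i)$ of all length-$i$ Lyndon words over $\{0,1\}$ satisfies $\prod_{i\ge 1}(1-x^i)^{-L(i)} = 1/(1-2x)$; dividing this into the definition of $B_k$ gives
\[
  (1-2x)B_k(x) = \prod_{i\ge 1}(1-x^i)^{N_k(i)},
\]
where $N_k(i) := L(i) - \ell_i^\alpha$ counts the length-$i$ binary Lyndon words that do \emph{not} begin with $0^k$.

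Next I would give $N_k(i)$ a cleaner description. Every binary Lyndon word of length $\ge 2$ begins with $0$ and ends with $1$, so its longest run of $0$'s is its initial run; hence for $i \ge 2$ such a word has $0^k$ as a prefix exactly when it contains $0^k$ as a factor, so $N_k(i)$ counts length-$i$ binary Lyndon words avoiding the factor $0^k$. For $i = 1$ one has $N_k(1) = 2$ when $k \ge 2$ (neither $0$ nor $1$ is long enough to carry the prefix $0^k$) and $N_k(1) = 1$ when $k = 1$; the case $k = 1$ is the only exception, and I would dispatch it directly, where the product collapses to $(1-2x)B_1(x) = 1-x$.

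For $k \ge 2$ the crux is to run Chen--Fox--Lyndon backwards: $\prod_{i\ge 1}(1-x^i)^{-N_k(i)}$ is the generating function for binary words all of whose Lyndon factors are \emph{allowed} (have no prefix $0^k$). Because $1$ is the largest and $0$ the smallest Lyndon word, every such word factors uniquely as $1^a\, m\, 0^b$ with $a,b \ge 0$ and $m$ either empty or a word that starts with $0$, ends with $1$, and avoids the factor $0^k$: a word has all Lyndon factors of length $\ge 2$ precisely when it neither starts with $1$ (else $1$ is its first factor) nor ends with $0$ (else $0$ is its last factor), and those length-$\ge 2$ factors avoid $0^k$ precisely when $m$ does, since each such factor ends in $1$ so no run of $0$'s straddles a boundary. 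A routine block decomposition of such $m$ (initial block $0^{a_1}1$ with $1 \le a_1 \le k-1$, followed by any number of blocks $0^{a_i}1$ with $0 \le a_i \le k-1$) yields $\sum_m x^{|m|} = (1-x)^2/(1-2x+x^{k+1})$, hence the whole generating function is $\frac{1}{1-x}\cdot\frac{(1-x)^2}{1-2x+x^{k+1}}\cdot\frac{1}{1-x} = \frac{1}{1-2x+x^{k+1}}$. Comparing with the identity above gives $(1-2x)B_k(x) = 1-2x+x^{k+1}$, a polynomial of degree $k+1 \le k+2$, which proves the conjecture (in fact with the recurrence holding already for $n \ge k+1$).

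The step I expect to be the main obstacle is the second Chen--Fox--Lyndon pass: verifying that $u \mapsto (1^a, m, 0^b)$ is a genuine bijection onto all admissible triples, and that ``avoids $0^k$ as a factor'' is exactly preserved along the decomposition. The generating-function arithmetic for $\sum_m x^{|m|}$ and the $k=1$ degeneration are then mechanical.
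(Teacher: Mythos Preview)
Your argument is correct, and there is nothing in the paper to compare it against: this statement is explicitly a \emph{conjecture}, supported only by the numerical evidence in Table~\ref{table:lyndonConjectures}, and the paper gives no proof. So you are not reproducing the paper's approach---you are resolving an open item.

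The method is sound at every step. Dividing the Euler product defining $B_k(x)$ by the Chen--Fox--Lyndon identity $\prod_i(1-x^i)^{-L(i)}=1/(1-2x)$ reduces the claim to showing that $\prod_i(1-x^i)^{N_k(i)}$ is a polynomial of small degree, where $N_k(i)$ counts length-$i$ Lyndon words without prefix $0^k$. Your observation that a binary Lyndon word of length $\ge 2$ has its longest zero-run at the front (any later run of $\ge a+1$ zeros would begin a smaller proper suffix) is exactly what is needed to recast $N_k$ as counting Lyndon words \emph{avoiding the factor} $0^k$. The second CFL pass is also valid: in the non-increasing Lyndon factorisation of any word, the factors equal to $1$ must sit at the front and those equal to $0$ at the back, so the word decomposes uniquely as $1^a m\,0^b$ with the middle factors all of length $\ge 2$; since each such factor ends in $1$, zero-runs cannot straddle factor boundaries, and $m$ avoids $0^k$ iff every factor does. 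The block count for $m$ then gives $(1-2x)B_k(x)=1-2x+x^{k+1}$ for $k\ge 2$ and $(1-2x)B_1(x)=1-x$, both polynomials of degree $\le k+1$. This actually yields the closed form $t_n^\alpha=0$ for $1\le n\le k$ and $t_n^\alpha=2^{n-k-1}$ for $n\ge k+1$, so the recurrence already holds from $n\ge k+1$, one step earlier than the conjecture asserts.
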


\begin{conjecture}
  For $k \geq 2$
  let $\mathcal{L}_\alpha$ be the sequence of the
  number of Lyndon words of length $n$ with prefix
  ${\alpha = (1, 0, 0, \dots, 0)}$ of length $k$ over the alphabet $\{0,1\}$.
  Then the Euler transform
  $\pazocal{E}(\mathcal{L}_\alpha) = \{t_{n}^\alpha\}_{n=1}^\infty$
  follows the linear recurrence $t_{n+k}^\alpha = \sum_{i=0}^{k-1} t_{n+i}^\alpha$
  for all $n \geq 1$.
\end{conjecture}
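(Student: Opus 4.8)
The plan is to reduce the statement to a single generating-function identity and then read the recurrence off it. Write $\alpha = 0^{k-1}1$ for the prefix in question (the family $01, 001, 0001, \dots$ appearing in Table~\ref{table:lyndonConjectures}), let $\ell_d^\alpha$ be the number of length-$d$ Lyndon words over $\{0,1\}$ beginning with $\alpha$, and put $t_0^\alpha := 1$, so that by definition of the Euler transform $\sum_{n\ge 0} t_n^\alpha x^n = \prod_{d\ge 1}(1-x^d)^{-\ell_d^\alpha}$. The target is
\[
  \sum_{n\ge 0} t_n^\alpha\, x^n \;=\; \frac{1 - x - x^2 - \cdots - x^{k-1}}{1 - x - x^2 - \cdots - x^k}.
\]
Granting this, I would multiply through by the denominator; since the numerator on the right has degree $k-1$, comparing coefficients of $x^N$ for each $N \ge k$ yields $t_N^\alpha = t_{N-1}^\alpha + \cdots + t_{N-k}^\alpha$, i.e.\ $t_{n+k}^\alpha = \sum_{i=0}^{k-1} t_{n+i}^\alpha$ for all $n \ge 0$, which is (slightly more than) the asserted recurrence.

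To prove the identity I would first reinterpret its left side combinatorially. Since $0^{k-1}1$ is itself a Lyndon word and, by the Chen--Fox--Lyndon theorem, every binary word factors uniquely as a weakly decreasing concatenation of Lyndon words, the product $\prod_d(1-x^d)^{-\ell_d^\alpha}$ is exactly the generating function of the set $\mathcal{G}$ of binary words all of whose Chen--Fox--Lyndon factors begin with $\alpha$. The crux is the structural description
\[
  \mathcal{G} = \{\varepsilon\}\ \cup\ \bigl\{\, w : w \text{ begins with } 0^{k-1}1,\ w \text{ ends with } 1,\ \text{and } w \text{ avoids the factor } 0^k \,\bigr\}.
\]

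For ``$\subseteq$'', take $w\in\mathcal{G}$ with factorization $w=\ell_1\ge\ell_2\ge\cdots\ge\ell_m$: each $\ell_i$ begins with $0^{k-1}1$, so $w$ does; each $\ell_i$ has length $\ge k\ge 2$, and a binary Lyndon word of length $\ge 2$ ends with $1$ (an ending $0$ would be a lexicographically smaller one-letter proper suffix), so $w$ ends with $1$ and no zero-run crosses a factor boundary; and no $\ell_i$ can contain $0^k$, since the proper suffix of $\ell_i$ starting at such an occurrence would begin $0^{k-1}0$ and hence be lexicographically smaller than $\ell_i$ (which begins $0^{k-1}1$), so $w$ avoids $0^k$. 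For ``$\supseteq$'', take $w$ of the stated form with factorization $w=\ell_1\ge\cdots\ge\ell_m$. A short lexicographic argument (using $\ell_i\le\ell_1$, that $\ell_1$ begins with $0$, and that $w$ ends with $1$) shows no $\ell_i$ is a one-letter word, so every $\ell_i$ begins with $0$, ends with $1$, and has initial zero-run of some length $j_i$ with $1\le j_i\le k-1$ (the upper bound because $w$ avoids $0^k$). Since $w$ begins with $0^{k-1}1$, any prefix of $w$ of length between $2$ and $k-1$ consists only of zeros, which no length-$\ge 2$ Lyndon word does; hence $|\ell_1|\ge k$, the word $\ell_1$ begins with $0^{k-1}1$, and $j_1=k-1$. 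Finally, for $i\ge 2$, if $j_i\le k-2$ then $\ell_i=0^{j_i}1\cdots$ and $\ell_1=0^{k-1}1\cdots$ agree in their first $j_i$ letters but disagree at position $j_i+1$, where $\ell_1$ has a $0$ and $\ell_i$ a $1$; this gives $\ell_i>\ell_1$, contradicting $\ell_i\le\ell_1$. Hence $j_i=k-1$, every factor begins with $0^{k-1}1$, and $w\in\mathcal{G}$.

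It then remains to compute the generating function of the right-hand set. A nonempty word there is $0^{k-1}1\,u$ where $u$ is empty or ends in $1$ and in either case avoids $0^k$ (the $1$ just before $u$ blocks a zero-run from extending into it); decomposing a $0^k$-avoiding word ending in $1$ into blocks $0^a1$ with $0\le a\le k-1$ gives their generating function $\frac{x(1-x^k)}{1-2x+x^{k+1}}$. Adding $1$ for empty $u$, multiplying by $x^k$, and adding $1$ for $w=\varepsilon$ gives
\[
  \mathrm{GF}(\mathcal{G}) = 1 + x^k\cdot\frac{1-x}{1-2x+x^{k+1}} = 1 + \frac{x^k}{1 - x - x^2 - \cdots - x^k} = \frac{1 - x - \cdots - x^{k-1}}{1 - x - \cdots - x^k},
\]
using $1-2x+x^{k+1}=(1-x)(1-x-x^2-\cdots-x^k)$, which is the desired identity. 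I expect the real obstacle to be the ``$\supseteq$'' half of the structural lemma: that is where the lexicographic order underlying the Chen--Fox--Lyndon factorization must be reconciled with both combinatorial constraints (the prescribed prefix and the avoidance of $0^k$); the remaining steps are routine manipulation of rational generating functions.
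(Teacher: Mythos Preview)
The paper does not prove this statement at all: it is presented as an open \emph{conjecture}, supported only by the numerical evidence in Table~\ref{table:lyndonConjectures}. Your argument therefore goes strictly beyond the paper, and as far as I can tell it is a complete and correct proof.

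A few remarks. First, you have (correctly) read the prefix as $\alpha = 0^{k-1}1$, matching the rows $01$, $001$, $0001$ of the table; the paper's ``$(1,0,0,\dots,0)$'' is evidently a slip, since no binary Lyndon word of length $\ge 2$ can begin with $1$. Second, the key idea---interpreting $\prod_d(1-x^d)^{-\ell_d^\alpha}$ via the Chen--Fox--Lyndon factorisation as the ordinary generating function of the language $\mathcal{G}$ of words all of whose Lyndon factors begin with $\alpha$, and then characterising $\mathcal{G}$ by three local constraints (prefix $0^{k-1}1$, last letter $1$, no factor $0^k$)---is exactly the right move, and both inclusions in your structural lemma check out. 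In the ``$\supseteq$'' direction the only step I would spell out more fully in a final write-up is the exclusion of one-letter Lyndon factors: if some $\ell_i$ had length $1$, then either $\ell_i=1$, forcing $\ell_1\ge 1$ while $\ell_1$ begins with $0$, or $\ell_i=0$, forcing $\ell_m\le 0$ and hence $\ell_m=0$, contradicting that $w$ ends with $1$. The rational generating-function computation and the identity $1-2x+x^{k+1}=(1-x)(1-x-\cdots-x^k)$ are routine and correct, and you in fact obtain the recurrence for all $n\ge 0$, slightly stronger than the conjectured range $n\ge 1$.
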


And more broadly, we have a conjecture in the case that the prefix is not the
zero sequence.
\begin{conjecture}
  Let $\mathcal{L}_\alpha$ be the sequence of the
  number of Lyndon words of length $n$ with prefix $\alpha$ over the alphabet
  $\{0,1\}$ such that $\alpha$ contains at least one $1$.
  Then the Euler transform of the sequence,
  $\pazocal{E}(\mathcal{L_\alpha}) = \{t_{n}^\alpha\}_{n=1}^\infty$,
  follows the linear recurrence where all terms have coefficients of $0$ or $1$.
\end{conjecture}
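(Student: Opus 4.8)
The plan is to translate the problem, through the necklace–Lyndon dictionary, into a transfer–matrix computation, and then to isolate the $0/1$-coefficient phenomenon as a statement about avoiding a very structured family of forbidden factors.

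\textbf{Reductions.} First I would dispose of the degenerate cases: a Lyndon word of length $\ge 2$ over $\{0,1\}$ must begin with $0$, so if $\alpha$ begins with $1$ then $\mathcal L_\alpha$ is supported on $\{n=1\}$ at most and its Euler transform is eventually constant. Assume from now on that $\alpha$ begins with $0$, contains a $1$, and has length $m$. Taking the logarithm of the product defining the Euler transform, $\log\!\big(1+\sum_n t^\alpha_n x^n\big)=\sum_{N\ge 1}\frac{c_N}{N}x^N$ with $c_N=\sum_{d\mid N}d\,\ell^\alpha_d$. Here $d\,\ell^\alpha_d$ equals the number of aperiodic binary words of length $d$ whose lexicographically least rotation has prefix $\alpha$ (each Lyndon word with prefix $\alpha$ contributes its $d$ rotations); resolving an arbitrary word into its aperiodic root then shows that, for $N\ge m$, $c_N=P^\alpha_N-\sum_{d\mid N,\,d\in E_\alpha}d$, where $P^\alpha_N$ counts length-$N$ words whose least rotation begins with $\alpha$, and $E_\alpha$ is empty unless the fundamental period $\pi$ of $\alpha$ satisfies $\pi<m$ and $\alpha[1..\pi]$ is itself a Lyndon word, in which case $E_\alpha=\{\pi\}$.

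\textbf{Transfer matrices.} The combinatorial heart of the reduction is that $v$ has least rotation beginning with $\alpha$ if and only if, cyclically, every length-$m$ factor of $v$ is $\ge\alpha$ and at least one equals $\alpha$; consequently, for $N\ge m$, $P^\alpha_N=\operatorname{tr}(A_{<\alpha}^N)-\operatorname{tr}(A_{\le\alpha}^N)$, where $A_{<\alpha}$ and $A_{\le\alpha}$ are the $0$–$1$ transfer matrices whose $N$-th trace counts cyclic binary words avoiding every length-$m$ factor $<\alpha$, respectively $\le\alpha$. I would then check that the finitely many low-degree corrections cancel exactly---the one from $N<m$ in $P^\alpha_N$ and the one from the truncation of the $E_\alpha$-sum are both equal to $\sum_{k\pi<m}x^{k\pi}/k$---so that, using $\sum_N\frac{\operatorname{tr}(A^N)}{N}x^N=-\log\det(I-xA)$,
\[
  1+\sum_n t^\alpha_n x^n \;=\; \Big(\textstyle\prod_{d\in E_\alpha}(1-x^d)\Big)\,\frac{\det(I-xA_{\le\alpha})}{\det(I-xA_{<\alpha})}.
\]
In particular the Euler transform is rational, so $\{t^\alpha_n\}$ satisfies a linear recurrence whose characteristic polynomial divides the reverse of $\det(I-xA_{<\alpha})$, and the threshold $N_\alpha$ is governed by the degree of the numerator.

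\textbf{The crux.} What remains is to prove that $\det(I-xA_{<\alpha})$ can be written as $1-\sum_{i\in S_\alpha}x^i$ for a finite $S_\alpha\subseteq\mathbb Z_{\ge1}$; this yields the recurrence $t^\alpha_n=\sum_{i\in S_\alpha}t^\alpha_{n-i}$ with all coefficients $0$ or $1$. A length-$m$ binary string is $<\alpha$ precisely when it has some prefix $\alpha[1..j-1]\,0$ with $\alpha_j=1$, so the forbidden set collapses to $F_\alpha=\{\alpha[1..j-1]\,0:2\le j\le m,\ \alpha_j=1\}$, a family of words each beginning with $0$ and pairwise incomparable under the prefix order. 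When $\alpha$ has a single $1$ (so $\alpha=0^a10^b$ and $F_\alpha=\{0^{a+1}\}$), the Guibas--Odlyzko/Goulden--Jackson formula gives $\det(I-xA_f)=x^{|f|}+(1-2x)C_f(x)$ with $C_f$ the autocorrelation polynomial, and expanding $C_{0^{a+1}}=1+x+\dots+x^a$ yields $\det(I-xA_{<\alpha})=1-x-\dots-x^{a+1}$; this already subsumes the two more specific conjectures stated just above. For $\alpha$ with several $1$s one must run the multiple-pattern cluster method and show that the resulting determinant---an alternating sum over clusters of $F_\alpha$-words weighted by their mutual correlations---still telescopes to $1-\sum_{i\in S_\alpha}x^i$. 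I expect this telescoping to be the main obstacle; the leverage is that the forbidden words $\alpha[1..j-1]\,0$ are all nested truncations of $\alpha$, so their correlation polynomials are forced by the run-length structure of $\alpha$, and carrying this bookkeeping through the cluster sum should simultaneously produce an explicit description of $S_\alpha$ in terms of the positions of the $1$s (and the period $\pi$) of $\alpha$.
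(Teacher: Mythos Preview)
The statement is presented in the paper as an open \emph{conjecture}, supported only by the empirical data in Table~\ref{table:lyndonConjectures}; there is no proof in the paper to compare against.

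Your proposal is a serious and largely coherent strategy. The logarithmic rewriting of the Euler transform, the identification $c_N=\sum_{d\mid N}d\,\ell^\alpha_d$ with a cyclic-word count via the Lyndon/necklace correspondence, the characterisation ``least rotation begins with $\alpha$ iff every cyclic $m$-factor is $\ge\alpha$ and at least one equals $\alpha$'', and the passage to $\det(I-xA)$ via the trace formula are all sound; your remark that a Lyndon word has no nontrivial period indeed forces $E_\alpha$ to be at most a singleton. In particular your argument already \emph{proves} that $\{t^\alpha_n\}$ is eventually linearly recurrent (rationality of the generating function), and your single-forbidden-word computation for $\alpha=0^a10^b$ yields $\det(I-xA_{<\alpha})=1-x-\dots-x^{a+1}$, which settles the case where $\alpha$ contains exactly one~$1$ and subsumes the two more specific conjectures stated just above this one.

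That said, the proposal does not prove the conjecture. The decisive step is precisely your ``crux'': that $\det(I-xA_{<\alpha})=1-\sum_{i\in S_\alpha}x^i$ for arbitrary $\alpha$ containing several $1$s. You establish this only when $|F_\alpha|=1$ and then write ``I expect this telescoping to be the main obstacle''---which is an honest acknowledgement that the main step is missing. For the multi-pattern case the Goulden--Jackson/cluster determinant is an alternating sum over the full correlation matrix of $F_\alpha=\{\alpha[1..j-1]\,0:\alpha_j=1\}$; although these words are prefix-incomparable, their mutual correlations are governed by the border structure of $\alpha$, and no argument is given that the resulting determinant collapses to the desired $0/1$ form. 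A secondary loose end is the low-degree bookkeeping: you assert that the $N<m$ corrections to $P^\alpha_N$ and the truncated $E_\alpha$-sum ``cancel exactly'', but this is stated rather than verified, and you would also need to check that no cancellation between numerator and denominator alters the recurrence coefficients. Until the determinant identity is established in general, the conjecture remains open---your outline narrows the problem substantially but does not close it.
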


\begin{openquestion}
  If, as the evidence suggests, $\pazocal{E}(\mathcal{L_\alpha})$ follows a
  linear recurrence,
  what is the length of the recurrence and
  what are the coefficients of the recurrence
  as a function of $\alpha$?
\end{openquestion}



\printbibliography
\end{document}